\title{On function field Mordell-Lang and Manin-Mumford}
\date{\today}
\author{Franck Benoist\thanks{Partially supported by ANR MODIG (ANR-09-BLAN-0047) Model
    theory and Interactions with Geometry and ANR ValCoMo
    (ANR-13-BS01-0006) Valuations, Combinatorics and Model Theory}
   \and Elisabeth
  Bouscaren\footnotemark[1] \and Anand Pillay\thanks{Partially supported by grants from the EPSRC and NSF} }
\newtheorem{Theorem}{Theorem}[section]
\newtheorem{Proposition}[Theorem]{Proposition}
\newtheorem{Definition}[Theorem]{Definition} 
\newtheorem{Definitions}[Theorem]{Definitions} 
\newtheorem{Remark}[Theorem]{Remark}
\newtheorem{Lemma}[Theorem]{Lemma}
\newtheorem{Corollary}[Theorem]{Corollary}
\newtheorem{Fact}[Theorem]{Fact}
\newtheorem{Claim}[Theorem]{Claim}
\newcommand{\F}{\mathbb F}
\newcommand{\C}{\mathbb C}
\newcommand{\FP}{\F_{p}^{\rm{alg}}} 
\newcommand{\conc}{^\curlywedge}
\newcommand{\CC}{\mathcal C}
\begin{document}

\maketitle

\begin{abstract} 
We give a reduction of the function field Mordell-Lang conjecture to the
function field Manin-Mumford conjecture, for abelian varieties,  in all characteristics, via
model theory, but avoiding recourse to the dichotomy theorems for
(generalized) Zariski geometries. Additional ingredients include the
``Theorem of the kernel", and a result of Wagner on commutative groups
of finite Morley rank without proper infinite definable subgroups. 
In positive characteristic, where the main interest lies, there is one more crucial ingredient: 
``quantifier-elimination" for the corresponding $A^{\sharp} =
p^{\infty}A(\cal U)$ where $\cal U$ is a saturated separably closed
field.
\end{abstract}

\section{Introduction}

This paper concerns relationships between ``known" results.  The original motivation was to supply a transparent account of the function field Mordell-Lang conjecture in positive characteristic.  The possibility of reducing Mordell-Lang to Manin-Mumford, in the function field case, was initiated in a talk  by the third author in Paris, in December 2010.  
Damian R\"ossler picked up the theme, and eventually with Corpet, produced a successful algebraic-geometric account of  such a reduction, in positive characteristic (\cite{R}, \cite{C}). Our  original strategy (quite different from what R\"ossler did) presented in the 2010 talk referred to above, involved some soft stable-group theory results, together with, in positive characteristic,  a conjectural ``quantifier-elimination" for certain type-definable groups.  In the current paper
 we prove that this strategy  works,  supplying the so far missing
ingredient in characteristic $p$, the ``quantifier elimination''
result for abelian varieties.   

The subtext is Hrushovski's proof of function field Mordell-Lang
\cite{Hrushovski}, which depends on a dichotomy theorem for
(generalized) Zariski geometries. In the characteristic $0$ case, it
is classical (strongly minimal) Zariski geometries which are relevant
and the dichotomy theorem is proved in \cite{HZ} and in \cite{Z}, although all proofs are complicated, to say the least.  But in the  positive characteristic case, {\em type-definable}  Zariski geometries are the relevant objects. The needed dichotomy in this case  is a combination of  a complicated axiomatic account of a field construction in \cite{HZ}, together with arguments in \cite{Hrushovski} showing that the axioms are satisfied for the particular minimal types in separably closed fields that we are interested in.  In both cases, the dichotomy theorem is difficult and its proof impenetrable for non model theory
  experts (as well as for many model-theorists). 
The current authors have been preoccupied for some years about seeing what is really going on, in particular avoiding the recourse to (generalized) Zariski geometries, and/or recovering the results by more direct arguments. 

In \cite{PZ} this issue was taken up, and an approach using differential jet spaces was developed. This succeeded in characteristic $0$, but not entirely in positive characteristic due to inseparability issues, although the approach recovered some cases due to Abramovich and Voloch \cite{AV}. It is still open whether the approach can be tweaked so as to work in general in the characteristic $p$ case.

In \cite{PR} Pink and R\"ossler gave a reasonably transparent algebraic-geometric proof of function field Manin-Mumford in positive characteristic with {\em all} torsion points in place of prime-to-$p$ torsion points.  This suggested to us to try to reduce function field Mordell-Lang to function field Manin-Mumford, but still using model-theoretic methods.  In the current paper we succeed in doing this, producing, so to speak, a ``second generation" model-theoretic proof of the Mordell-Lang  conjecture for abelian varieties over function fields. 
 Exploring connections between the methods of R\"ossler and Corpet and either the jet space ideas in \cite{PZ}, or  the current paper, would be interesting.  We should  also mention the interesting paper  \cite{Paul-Ziegler} in which another new algebraic-geometric account of Mordell-Lang in positive characteristic is given, but where only finitely generated groups $\Gamma$ are considered. 

Of course we could also consider the absolute Mordell-Lang conjecture in characteristic $0$ (proved by Faltings, McQuillan,..) and ask whether there is a ``soft" reduction to absolute Manin-Mumford. We doubt that this is the case as these two theorems seem to us (maybe incorrectly) to be of different orders of difficulty. That such a reduction is possible in the function field case has additional interest.

\vspace{5mm}
\noindent
{\em Acknowledgements.} We would like to thank  both Universit\'e
Paris-Sud, Orsay, where the third author was a Professeur Invit\'e in
March-April 2010 and the Mathematical Sciences Research Institute,
Berkeley, where the authors participated in the Spring 2014 model theory
program. The first author would like to thank Fran\c{c}oise Delon for
very useful discussions about the quantifier elimination question. Thanks also to the referee of
a first version of this paper for helpful comments, resulting in a reorganization of the paper.

\section{Preliminaries}\label{preliminaries}

The statements of the function field Mordell-Lang which we prove in this paper, as well as the main arguments we use, are specific to abelian varieties. They are also restricted to the function field case in 
one variable ($\C(t)^{{\rm alg}}$ in characteristic $0$, and
$\F_p(t)^{{\rm sep}}$ in characteristic $p$)  as  the 
``Theorem of the Kernel'' has been proved only in
these cases, in \cite{BP} and  in \cite{Roessler-torsion}. Basic
background, as well as references,  on abelian varieties, Mordell-Lang
and Manin Mumford can be for example found in \cite{hindrymanchester}.

\vspace{5mm}
\noindent
{\bf Statement of function field Mordell-Lang  in characteristic $0$.}
Let $K = \C(t)^{\rm{alg}}$, the algebraic closure of $\C(t)$. Let $A$ be an abelian variety over $K$ with $\C$-trace $0$. Let $X$ be an irreducible subvariety of $A$ (defined over $K$), and let $\Gamma$ be a ``finite-rank'' subgroup of $A(K)$, namely $\Gamma$ is contained in the division points of a finitely generated subgroup of $A(K)$. Suppose $X\cap \Gamma$ is Zariski-dense in $X$. Then $X$ is a translate of an abelian subvariety of $A$.

\vspace{5mm}
\noindent
{\bf Statement of function field Manin-Mumford in characteristic $0$.} As above, except that the hypothesis on $\Gamma$ is strengthened to: $\Gamma$ is contained in the group of torsion points of $A$.

\vspace{5mm}
\noindent
{\bf Statement of function field Mordell-Lang in characteristic $p>0$.} Let $K$ be the separable closure of $\F_{p}^{\rm{alg}}(t)$. Let $A$ be an abelian variety over $K$ with $\FP$-trace $0$. Let $X$ be an irreducible subvariety of $A$, defined over $K$. And let $\Gamma$ be a subgroup of $A(K)$ contained in the prime-to-$p$-division points of a finitely generated subgroup. Suppose that $X\cap \Gamma$ is Zariski-dense in $X$. Then $X$ is a translate of an abelian subvariety of $A$. 

\vspace{5mm}
\noindent
The formulation involving prime-to-$p$ division points is due to
Abramovich and Voloch \cite{AV}. One could also ask what happens when
$K$ is algebraically closed and  $\Gamma < A(K)$ is the group of {\em all}  division points of some finitely generated subgroup. No obstacle is currently known to this.

\vspace{5mm}
\noindent
{\bf Statement of function field  Manin-Mumford  in characteristic $p$.} As above, except that $\Gamma$ is assumed to be contained in the group of {\em all torsion points} of $A$.

\vspace{5mm}
\noindent
We will write $MM$ for Manin-Mumford and $ML$ for Mordell-Lang.
And from now on, $k$ will denote $\C$ in characteristic zero and $\FP$ in characteristic $p$, $K = k(t)^{\rm{alg}}$ in characteristic zero, and $K = k(t)^{sep}$ in characteristic $p$.

In characteristic $0$, function field $MM$ as stated  is clearly a 
special case of function field $ML$. And it follows from the absolute
case of $MM$, of which there are many proofs more generally for all commutative algebraic groups (for example see \cite{hindryMM} or, for a model theoretic proof, \cite{Hrushovski-MM}). In positive
characteristic $MM$ (with all torsion points) is proved by Pink and
R\"ossler \cite{PR}. The proof uses a variety of methods, including 
Dieudonn\'e modules, but is accessible. A proof was also given by
Scanlon \cite{Sc} using  the dichotomy theorem in $ACFA_{p}$ (algebraically closed fields of characteristic $p$ with
a generic automorphism) which itself depends on an even more generalized notion of Zariski geometries than used in separably closed fields.

\medskip 

\noindent 
As explained in the introduction, {\bf {\em in all
    characteristics},  assuming function field Manin-Mumford,  we give a
  proof of function field Mordell-Lang  {\em  without appealing to the
    dichotomy theorems for Zariski geometries}.}

The beginning of our proof  follows the  first steps of Hrushovski's original proof
in \cite{Hrushovski}, and has the
uniform strategy in both characteristic zero and characteristic $p$ of ``embedding" the algebraic-geometric set-up in a differential algebraic environment.  But then 
we replace the use of the dichotomy  theorems by some other ingredients which  appeal to ``softer'' model theory. \\
Two of these ingredients are  common to all characteristics, as we will explain below: the first one, of an algebraic nature, we refer to as the ``Theorem of the Kernel'' ; the second one, pertaining to model theory, is  the structure of $g$-minimal groups of finite Morley rank. 

Then, in characteristic zero, like Hrushovski himself in his proof, we use the 
``weak socle theorem"  for groups of finite Morley rank (see Section \ref{Char.0}). 

In characteristic $p$, we do not need  this socle theorem for groups, but we
  need a new result, which forms the core of the paper:   in
 section \ref{QEsection}, we prove quantifier elimination for the
 induced structure on the subgroup of infinitely $p$-divisible
 $K$-rational points of the abelian variety $A$, and this of course is accomplished
 without appealing to the dichotomy theorems.

 We assume  familiarity with model theory, basic stability, as well as
differentially and separably closed fields.  The book \cite{MMP} is a
reasonable reference, as well as \cite{Pillay-book} for more on
stability theory. Definability means with parameters unless we say
otherwise.  

\subsection{Passing to the differential framework, the group $A^\sharp$}\label{sharp}

As in Hrushovski's approach, (as well as in Buium's in  \cite{Buium-Annals} for the characteristic $0$ case), we pass 
to a differential algebraic framework: differentially closed fields of
characteristic zero, or 
 separably closed fields of degree of imperfection one, in
 characteristic $p$, which can be viewed as the existentially closed
 fields with Hasse-Schmidt derivations. 
In these enriched frameworks, one can ``replace'' the group $\Gamma$ by some (infinitely) definable subgroup of the rational points of $A$. \\
We fix some more notation. 

\noindent 
In characteristic $0$, $K$ has a unique derivation $\partial$
extending $d/dt$ on $k(t)$ and $K^{\rm{diff}}$ denotes a differential
closure of $(K,\partial)$. We work in the language of differential
fields,  with the first order theory of $K^{\rm{diff}}$, the theory of
differentially  closed fields of characteristic zero, $DCF_{0}$. It will
be  convenient sometimes to work in a saturated elementary extension
$\cal U$ of $K^{\rm{diff}}$. Recall that $DCF_{0}$ is $\omega$-stable
with quantifier-elimination and elimination of imaginaries.

\noindent In characteristic $p$, $\cal U$ will be a saturated elementary
extension of $K$ in the language of fields. In contradistinction to the characteristic $0$ case, passing to
 $\cal U$ will be a crucial part of the proof, rather than just a
convenience. The first order theory of $K$ (or $\cal U$)  in the language of fields is
known as or denoted by $SCF_{p,1}$, the theory of separably closed
fields of characteristic $p$ and degree of imperfection $1$. It is
stable, but not superstable, and has quantifier-elimination and
elimination of imaginaries after either adding symbols for a $p$-basis
and the so-called $\lambda$-functions, or just symbols for a strict iterative Hasse-Schmidt derivation. 

From now on, $A$ is an abelian variety over $K$.

\begin{Definition}{\em{The group $A^\sharp$}.}
\newline (i) In characteristic $0$, $A^{\sharp}$ denotes  the ``Kolchin
closure of the torsion", namely the smallest definable (in the sense of
differentially closed fields) subgroup of $A(\cal U)$ which contains the
torsion subgroup (so note that $A^{\sharp}$ is definable  over $K$.)
\newline
(ii) In positive characteristic, $A^{\sharp}$ denotes
$p^{\infty}(A({\cal U})) =_{def} \bigcap_{n}p^{n}(A(\cal U))$ 
(an infinitely definable subgroup over $K$. 
\end{Definition}

\begin{Remark} {\rm In  characteristic $0$, the smallest definable
    subgroup containing the torsion subgroup exists by
    $\omega$-stability of the theory  $DCF_{0}$.  As $A^\sharp $ is
    definable, we can consider not only $A^\sharp(\cal U)$ , but also
    $A^\sharp (K^{\rm{diff}})$.  In  positive characteristic,
    $A^{\sharp}$ is only infinitely definable, it is the maximal
    divisible subgroup of $A({\cal U})$ and it is also the  smallest
    infinitely definable subgroup of $A(\cal U)$ which contains the
    prime-to-$p$ torsion of $A$.  Moreover $A^{\sharp}(K) = \bigcap_{n}p^{n}(A(K))$ 
    is infinite and  is  also the maximal divisible subgroup of $A(K)$.}    \end{Remark}

An exposition of further properties of $A^\sharp$ in both
characteristics, as well as precise references  can be found in
\cite{BBP}.

 \noindent For example, the following basic facts hold in all
 characteristics and were mostly originally  proved in \cite{Hrushovski}: 

\begin{Fact} \label{FactAsharpbasic}(i) $A^\sharp $ is also the   smallest Zariski dense (infinitely)  definable subgroup of $A(\cal U)$.\\
(ii) $A^{\sharp}$ is connected (no relatively definable subgroup of finite index), and of finite $U$-rank in char. $p$, and finite Morley rank in char. $0$.\\
(iii) If $A$ is a simple abelian variety, $A^\sharp$ has no proper
  infinite 
type-definable subgroup.\\
(iv)  If $A$ is the sum of simple $A_{i}$ then $A^{\sharp}$ is the sum of the $A_{i}^{\sharp}$.\\
\end{Fact}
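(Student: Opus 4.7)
The plan is to handle the four items in sequence, leveraging the definition of $A^\sharp$ (Kolchin closure of torsion in characteristic zero; $\bigcap_n p^n A(\mathcal{U})$ in characteristic $p$) together with standard facts about torsion in abelian varieties and elementary divisibility arguments.

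For \textbf{(i)}, I would first prove Zariski density. Since $A[n]$ has $n^{2\dim A}$ points, the torsion (and already the prime-to-$p$ torsion) is Zariski dense in $A$; this torsion sits inside $A^\sharp$ either by the very definition (char.\ $0$) or because it is divisible and hence lies in every $p^n A(\mathcal{U})$ (char.\ $p$). The Zariski closure of $A^\sharp$ is then an algebraic subgroup containing torsion, hence equal to $A$. For the converse minimality statement, let $H \leq A(\mathcal{U})$ be a Zariski-dense (type-)definable subgroup. In char.\ $0$ it suffices to show that $H \supseteq A_{\mathrm{tor}}$, forcing $A^\sharp \subseteq H$ by the very definition of $A^\sharp$; in char.\ $p$ the analogous step is $H \supseteq p^n A(\mathcal{U})$ for every $n$. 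In both cases the key is that the quotient $A(\mathcal{U})/H$ is (type-)definable, and Zariski density of $H$ forces it to be torsion-free (char.\ $0$) respectively uniquely $p$-divisible up to finite kernel (char.\ $p$), so that the relevant preimage is pulled back into $H$.

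For \textbf{(ii)}, connectedness is a divisibility argument. $A^\sharp$ is divisible: in char.\ $0$, because $nA^\sharp$ is a definable subgroup containing the (divisible) torsion, whence $nA^\sharp = A^\sharp$ by minimality; in char.\ $p$, $A^\sharp$ is $p$-divisible by construction and prime-to-$p$ divisible because each $\ell : A \to A$ with $\gcd(\ell,p) = 1$ is a separable isogeny, hence surjective on $A(\mathcal{U})$ and on $A^\sharp$. A relatively definable subgroup $H$ of finite index $n$ in $A^\sharp$ would then satisfy $H \supseteq nA^\sharp = A^\sharp$. For the finite rank statements, use that in char.\ $0$ the Manin logarithmic derivative realises $A^\sharp$ as the kernel of a definable homomorphism from $A(\mathcal{U})$ to a vector group, giving finite Morley rank (indeed rank $\dim A$ in the trace-zero case); in char.\ $p$, finite $U$-rank of $p^\infty A(\mathcal{U})$ in $SCF_{p,1}$ is Hrushovski's theorem from \cite{Hrushovski}.

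For \textbf{(iii)}, assume $A$ simple and let $H \leq A^\sharp$ be an infinite type-definable subgroup. Its Zariski closure in $A$ is an algebraic subgroup, so is either finite or all of $A$; the first case contradicts $H$ being infinite, so $H$ is Zariski dense and (i) gives $A^\sharp \subseteq H$, whence equality. For \textbf{(iv)}, fix an isogeny $f \colon \prod_i A_i \to A$ provided by Poincar\'e reducibility. Both operations $B \mapsto B^\sharp$ commute with finite products directly from the definitions, giving $\left(\prod_i A_i\right)^\sharp = \prod_i A_i^\sharp$. Since $f$ has finite kernel, it sends torsion to torsion and $\bigcap_n p^n(\cdot)$ into $\bigcap_n p^n(\cdot)$, so $f\bigl(\prod_i A_i^\sharp\bigr)$ is a Zariski-dense type-definable subgroup of $A^\sharp$; the reverse inclusion $A^\sharp \subseteq f\bigl(\prod_i A_i^\sharp\bigr)$ comes from (i), and we obtain $A^\sharp = \sum_i f(A_i^\sharp)$ after identifying each $f(A_i^\sharp)$ with the sharp of the image abelian subvariety.

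\textbf{Main obstacle.} The genuine content lies in the minimality half of (i) and, in char.\ $p$, the finite $U$-rank assertion of (ii); both rest on fine structural properties of $p^\infty A(\mathcal{U})$ and the Manin kernel in separably closed and differentially closed fields respectively, going back to Hrushovski and recalled in \cite{BBP}. Everything else — Zariski density of torsion, divisibility, the simplicity reduction, and product compatibility — is essentially formal.
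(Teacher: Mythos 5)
The paper does not actually prove Fact~\ref{FactAsharpbasic}: it is stated as a ``Fact'' with the remark that these properties ``were mostly originally proved in \cite{Hrushovski}'' and with a pointer to \cite{BBP} for a fuller exposition and precise references. So there is no internal proof to compare against; what you have written is a reconstruction of the cited arguments, and it should be judged on its own terms.

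Your outline is broadly along the standard lines, but two points need repair. First, the minimality half of (i) in characteristic $p$. You invoke the phrase ``the quotient $A(\mathcal{U})/H$ is uniquely $p$-divisible up to finite kernel'' to conclude $H \supseteq p^n A(\mathcal{U})$ for all $n$, but this does not follow from Zariski density in any obvious way, and as stated is not a proof. The known route (going back to \cite{Hrushovski} and recorded in \cite{BBP}) is: for a \emph{relatively definable} Zariski-dense subgroup $H_0$ of $A(\mathcal{U})$, one shows $H_0 \supseteq p^n A(\mathcal{U})$ for some $n$, using the structure of definable subgroups in $SCF_{p,1}$; then a type-definable Zariski-dense $H = \bigcap_i H_i$ has each $H_i$ Zariski dense, hence $H \supseteq \bigcap_n p^n A(\mathcal{U}) = A^\sharp$. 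That step is precisely the nontrivial content and your formulation glosses over it. Second, in (ii) you assert that in characteristic $0$ ``the Manin logarithmic derivative realises $A^\sharp$ as the kernel of a definable homomorphism.'' Equality $A^\sharp = \ker\mu_A$ is not true for general $A$; what one has is the inclusion $A^\sharp \subseteq \ker\mu_A$ (torsion lies in the Manin kernel, and the Manin kernel is definable), and $\ker\mu_A$ is a finite-dimensional differential algebraic group, which already suffices to conclude that $A^\sharp$ has finite Morley rank. The weaker containment is what you should state. The remaining parts — Zariski density of torsion, divisibility giving connectedness, the reduction of (iii) to (i) via the Zariski closure of $H$ being an algebraic subgroup, and the product/isogeny bookkeeping in (iv) — are correct and essentially formal, as you observe.
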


We can now state:

\vspace{5mm}
\noindent
{\bf The  Theorem of the Kernel.} {\em In all characteristics: Let $A$ be an abelian variety
  over $K$ with $k$-trace $0$, then  $A^{\sharp}(K)$ is contained in the group of torsion points of $A$.}

\vspace{2mm}
\noindent
This is a differential algebraic or model-theoretic theorem of the kernel. In Corollary K3 of \cite{BP}, the characteristic $0$ case is given, where it is deduced from Chai's strengthening of a theorem of Manin.
In \cite{Roessler-torsion}, the positive characteristic case is
proved. Note that in the positive characteristic, the hypothesis of
$k$-trace zero is not needed. 

\vspace{5mm}

\noindent
We now pass to the purely  model-theoretic ingredients.

\subsection{Groups of finite Morley rank and induced structures}

\begin{Definition} Let $G$ be a group (with additional structure), which has
  finite Morley rank and is commutative and connected.  We say that $G$
  is $g$-{\em minimal} if it has no proper nontrivial  connected definable
  subgroup (equivalently, no proper infinite definable subgroup).
\end{Definition} 

Let us remark that in this context, $g$-minimality of $G$ passes to saturated elementary
extensions: In groups of finite Morley rank, there is a bound on the
cardinality of uniformly definable families of finite subgroups, as they
do not have the ``finite cover property''(see for example \cite{Poizatbook}). 

\smallskip 
\noindent 
The following appears in \cite{W} (as a direct consequence of Corollary 6):

\begin{Theorem} \label{wagner} Suppose that $G$ is $g$-minimal. Then any infinite algebraically closed subset of $G$ is (the universe of) an elementary substructure of $G$. 
\end{Theorem}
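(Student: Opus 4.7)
The plan is to verify the Tarski--Vaught criterion: given $\bar a \in H$ and a consistent formula $\phi(x,\bar a)$, I must produce a realization in $H$. When $\phi(G,\bar a)$ is finite, every realization is algebraic over $\bar a$ and therefore lies in $\mathrm{acl}(\bar a)\subseteq H$, so nothing more is needed. For the main case, where $\phi(G,\bar a)$ is infinite, I would argue by contradiction, assuming $H\cap\phi(G,\bar a)=\emptyset$, i.e.\ $H\subseteq U:=G\setminus\phi(G,\bar a)$. Note first that $H$ is automatically a subgroup of $G$, since the group operation is $\emptyset$-definable and $H$ is $\mathrm{acl}$-closed.

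The key idea is to upgrade the non-definable subgroup $H\subseteq U$ to a \emph{definable} subset of $G$ that still contains $H$ and is stable under $H$-translation. The natural candidate is
$$V := \bigcap_{h \in H}(U - h) = \{g \in G : g + H \subseteq U\},$$
which contains $H$ because $H$ is a subgroup ($h_0+H=H\subseteq U$ for every $h_0\in H$). Finite Morley rank gives $\omega$-stability and hence the descending chain condition on definable subsets of $G$; applied to finite subintersections of the uniformly definable family $\{U-h\}_{h\in H}$, this produces a finite $H_0\subseteq H$ with $V=\bigcap_{h\in H_0}(U-h)$, so $V$ is genuinely definable, over parameters from $H$.

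The decisive observation is that $V$ is invariant under translation by every $h_0\in H$: using $h_0+H=H$, one checks $V-h_0=V$ directly. Therefore $H$ is contained in the setwise stabilizer $\mathrm{Stab}(V):=\{g\in G:g+V=V\}$, which is a definable subgroup of $G$. Since $H$ is infinite, so is $\mathrm{Stab}(V)$, and $g$-minimality forces $\mathrm{Stab}(V)=G$. Thus $V$ is $G$-translation invariant, so $V\in\{\emptyset,G\}$: the empty option is incompatible with $0\in H\subseteq V$, while $V=G$ combined with $V\subseteq U$ (take $h=0$ in the intersection) forces $U=G$, hence $\phi(G,\bar a)=\emptyset$, contradicting consistency.

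The one point requiring care is verifying that $V$ is genuinely definable and not merely type-definable, so that $g$-minimality can be applied to its stabilizer; this is exactly where finite Morley rank enters, via $\omega$-stability and DCC on definable sets. Everything else is bookkeeping, and the argument uses neither connectedness in a serious way nor any finer structure of $G$ beyond $g$-minimality and commutativity.
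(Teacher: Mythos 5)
Your reduction to the Tarski--Vaught test is the right starting point, the case of finite $\phi(G,\bar a)$ is handled correctly, and the observation that an $\mathrm{acl}$-closed $H$ is a subgroup is fine. The idea of passing to $V=\bigcap_{h\in H}(U-h)$, the largest $H$-translation-invariant subset of $U$, is natural. But there is a genuine gap at the step you yourself flag as ``the one point requiring care'': the claim that $V$ is definable. You justify it by invoking ``the descending chain condition on definable subsets'' as a consequence of $\omega$-stability; however, $\omega$-stability gives DCC only on definable \emph{subgroups} (Macintyre; cf.\ Baldwin--Saxl for uniformly definable families of subgroups), not on arbitrary definable sets. Already in a strongly minimal group the cofinite sets form an infinite strictly decreasing chain, all of full Morley rank and degree, so DCC on definable sets is simply false. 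More to the point, the finite subintersections $V_n=\bigcap_{i\le n}(U-h_i)$ have eventually constant Morley rank and degree, but that does not force them to become equal; in general $V$ is only type-definable. Note also that the $V_n$ themselves are \emph{not} $H$-invariant (translating by $h_0$ shifts the finite index set $\{h_1,\dots,h_n\}$), so you cannot run the stabilizer argument on one of them as a substitute: your argument genuinely needs the full, possibly infinite, intersection.

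This gap is not cosmetic. Without definability of $V$, the set $\{g\in G: V-g=V\}$ need not be a definable subgroup (it is not even type-definable in general: $V-g\subseteq V$ is a $\forall\exists$ condition over the infinitely many formulas defining $V$), so $g$-minimality cannot be applied to it. Repairing this would require either producing a \emph{definable} $H$-invariant set strictly between $H$ and $U$, or switching to a different mechanism such as the model-theoretic stabilizer of a well-chosen generic type --- and this is precisely where the real content of the theorem lives. The paper does not give a proof of this statement; it is cited as a direct consequence of Corollary~6 of \cite{W}, whose argument has a rather different flavor. As it stands, your proof does not go through.
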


\noindent
{\bf Remark}: So a $g$-minimal group behaves  like a strongly minimal set, where it is well known that infinite algebraically closed subsets are elementary substructures. By Zilber's indecomposability theorem a $g$-minimal group is 
{\em almost strongly minimal}, namely in the algebraic closure of a
definable strongly minimal  subset $D$  together with a finite set $F$  of
parameters. Then it is also well-known that any algebraically
closed (over $F$)
subset whose intersection with $D$ is infinite  is an elementary
substructure. But here we only suppose that $X$ is  algebraically closed
(over $\emptyset$)  and  infinite, so one cannot directly deduce the result from the almost strongly minimal case.

\vspace{5mm}
\noindent
We now fix an ambient saturated stable structure $\cal U$. A subset $X$
of ${\cal U}^{n}$ is called {\em type-definable} (or infinitely
definable)  if it is the
intersection of a small   collection of definable sets, namely defined by a small partial type. In the case of interest $X$ will be a countable intersection of definable sets.

Suppose $X$ is type-definable over the small set of parameters $A$. By a
{\em relatively definable} subset $Y$ of $X^{m}$ we mean the
intersection of some definable (with parameters) subset $Z$ of a
suitable Cartesian power of ${\cal U}$ with $X^{m}$. We will say
that $Y$ is relatively $A$-definable, or relatively definable over $A$,
if $Z$ can be chosen to be $A$-definable.

\smallskip  

\noindent Let us fix a set $X$, type-definable  over some small set of parameters $A$.

\begin{Lemma}\label{invariant} Let $Y$ be a relatively definable subset
  of $X$ (or some Cartesian power of $X$) which is invariant under
  automorphisms of 
$\cal U$ fixing $A$ pointwise. Then $Y$ is relatively $A$-definable.
\end{Lemma}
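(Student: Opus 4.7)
The approach is a standard saturation-plus-compactness argument, in the spirit of the equivalence ``invariance under $\operatorname{Aut}(\mathcal{U}/A)$ is equivalent to definability over $A$'' familiar from the theory of imaginaries.

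\textbf{Step 1 (type-invariance).} First I will show that, restricted to $X^m$, membership in $Y$ depends only on type over $A$. Given $a, a' \in X^m$ with $\operatorname{tp}(a/A) = \operatorname{tp}(a'/A)$, saturation of $\mathcal{U}$ (and smallness of $A$) yields $\sigma \in \operatorname{Aut}(\mathcal{U}/A)$ with $\sigma(a) = a'$, and the assumed $A$-invariance of $Y$ then gives $a \in Y \Leftrightarrow \sigma(a) \in \sigma(Y) = Y \Leftrightarrow a' \in Y$.

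\textbf{Step 2 (local $A$-definability).} Write $Y = Z \cap X^m$ with $Z = \psi(\mathcal{U}, b)$ for some formula $\psi(x,y)$ and parameter $b$, and let $\Sigma_X$ be a partial $A$-type defining $X^m$. For each $a \in Y$, Step~1 forces $\operatorname{tp}(a/A)(x) \cup \Sigma_X(x) \vdash \psi(x,b)$: otherwise a realisation would have the same type over $A$ as $a$, lie in $X^m$, yet fail to be in $Y$. Compactness then produces an $A$-formula $\theta_a \in \operatorname{tp}(a/A)$ with $\theta_a(\mathcal{U}) \cap X^m \subseteq Y$. Symmetrically, for each $a \in X^m \setminus Y$ I obtain an $A$-formula $\theta'_a \in \operatorname{tp}(a/A)$ with $\theta'_a(\mathcal{U}) \cap X^m \cap Y = \emptyset$.

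\textbf{Step 3 (global compactness).} The partial $A$-type
\[\Sigma_X(x) \cup \{\neg \theta_a(x) : a \in Y\} \cup \{\neg \theta'_a(x) : a \in X^m \setminus Y\}\]
is inconsistent, since any realisation $c \in X^m$ lies in either $Y$ or $X^m \setminus Y$ and so fails $\neg\theta_c(c)$ or $\neg\theta'_c(c)$ respectively. A finite inconsistent sub-family provides finitely many $\theta_{a_1}, \ldots, \theta_{a_r}$ (with $a_k \in Y$) and $\theta'_{a'_1}, \ldots, \theta'_{a'_s}$ (with $a'_l \in X^m \setminus Y$) whose disjunction holds everywhere on $X^m$. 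Setting $\eta := \theta_{a_1} \vee \cdots \vee \theta_{a_r}$, one checks directly that $Y = \eta(\mathcal{U}) \cap X^m$: the inclusion $\supseteq$ is immediate from $\theta_{a_k}(\mathcal{U}) \cap X^m \subseteq Y$, while for $\subseteq$ any $a \in Y$ is forced into some $\theta_{a_k}$, since the $\theta'_{a'_l}$ cannot catch elements of $Y$.

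This is a template-style argument with no substantive obstacle; the only subtlety is realising that the ``negative'' formulas $\theta'_{a'_l}$ appearing in the finite cover can be dropped from $\eta$, which works because each $\theta'_{a'_l}(\mathcal{U}) \cap X^m$ is disjoint from $Y$ by construction.
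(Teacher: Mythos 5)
Your proof is correct, but it takes a genuinely different route from the paper. The paper works on the \emph{parameter} side: writing $Y = \phi(\mathcal U,b)\cap X^m$, it observes that $A$-invariance of $Y$ together with $A$-invariance of $X$ forces any $b_1\equiv_A b$ to give $\phi(\mathcal U,b_1)\cap X^m = Y$, and then applies compactness to the inconsistent partial type $\operatorname{tp}(b/A)(y)\cup\Sigma_X(x)\cup\{\neg(\phi(x,y)\leftrightarrow\phi(x,b))\}$ to obtain a single formula $\psi(y)\in\operatorname{tp}(b/A)$ for which $\exists y(\psi(y)\wedge\phi(x,y))$ relatively defines $Y$ over $A$. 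You instead work on the \emph{element} side: you first show membership in $Y$ is determined by type over $A$ within $X^m$, then extract for each element a positive or negative certificate formula $\theta_a$ resp.\ $\theta'_a$ in its $A$-type, and finally run compactness against $\Sigma_X$ to replace the infinite family of certificates by a finite disjunction, after which the negative pieces can be discarded. Both arguments are standard saturation-plus-compactness proofs; the paper's is shorter because it quantifies the parameter out in one shot, whereas yours has the extra (but harmless) step of showing the finite cover of $X^m$ by $\theta$'s and $\theta'$'s can be pruned to the $\theta$'s alone. One small remark: in Step 3 the conclusion that a \emph{nonempty} finite subfamily of the $\neg\theta$'s and $\neg\theta'$'s must be dropped tacitly uses that $\Sigma_X$ itself is consistent, i.e.\ $X\neq\emptyset$; if $X=\emptyset$ the lemma is vacuous anyway, so nothing is lost, but it is worth being aware of.
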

\begin{proof} Suppose $Y$ is relatively definable by formula $\phi(x,b)$  (namely $Y$ is the set of solutions in $X$ of $\phi(x,b)$), where we are exhibiting the required parameters $b$.  By our assumptions, we have that
if $tp(b_{1}/A) = tp(b/A)$ then $\phi(x,b_{1})$ relatively defines the same subset of $X$ as does $\phi(x,b)$. We can apply compactness to find a formula $\psi(y)\in tp(b/A)$ such that $Y$ is relatively defined by the formula $\exists y(\psi(y)\wedge\phi(x,y))$.

\end{proof}

Let us denote by ${\mathcal X}_{A}$  the structure with universe $X$
and predicates for all relatively definable over $A$  subsets of $X^{n}$, for all $n$.  With this notation:

\begin{Definition} We will say that $X_{A}$ has {\em quantifier elimination} or QE,  if $Th({\cal X}_{A})$ has quantifier elimination in the language above. 
\end{Definition}

It is clear from this definition  that:
\begin{Remark}  $X_{A}$ has QE if and only if whenever $Y$ is a relatively $A$-definable subset of $X^{n+1}$ then the projection of $Y$ to $X^{n}$ is relatively $A$-definable.
\end{Remark}

Let us recall a few basic facts about induced structures: 

\begin{Lemma} \label{inducedQEfacts} (i) Suppose $A\subseteq B$. Then $X_{A}$ has QE iff $X_{B}$ has QE  (so  we just say that $X$ has QE). 
\newline
(ii) $X$ has QE just if the projection of any relatively definable subset of any $X^{n+1}$ to $X^{n}$ is relatively definable (noting that in general it is only type-definable).
\newline
(iii) $X$ having QE is equivalent to ${\cal X}_{A}$ being a saturated structure.

\end{Lemma}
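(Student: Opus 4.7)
The lemma is a triplet of standard observations about induced structures on a type-definable set, with Lemma~\ref{invariant} as the common engine. I would prove (i) and (ii) jointly, then (iii).

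The ``easy'' half of (i) and (ii) is immediate from Lemma~\ref{invariant}. Given $Y \subseteq X^{n+1}$ relatively $A$-definable, its projection $\pi(Y)$ is type-definable and $A$-invariant; any information placing $\pi(Y)$ inside the class of relatively definable sets (for instance QE of some $\mathcal X_B$ with $B \supseteq A$, or the parameter-free hypothesis of (ii)) is then refined by Lemma~\ref{invariant} to relatively $A$-definable, which is the QE criterion of the Remark preceding the Lemma. Thus $\mathcal X_B\text{-QE} \Rightarrow \mathcal X_A\text{-QE}$ and the condition of (ii) implies QE of $\mathcal X_A$.

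The ``hard'' half starts from QE of $\mathcal X_A$ and needs to handle external parameters. Given $Y \subseteq X^{n+1}$ relatively definable with parameters $\bar c \in \mathcal U$, I would apply QE of $\mathcal X_A$ to an $A$-definable family with an extra variable $\bar z$ restricted to $X$, obtaining an $A$-formula $\psi(\bar x, \bar z)$ defining the projection whenever $\bar z \in X$. One then argues that $\psi(\bar x, \bar c)$ still defines $\pi(Y)$, using $A\bar c$-invariance of both sides together with Lemma~\ref{invariant}. This gives simultaneously $\mathcal X_A\text{-QE} \Rightarrow \mathcal X_{A \cup \{\bar c\}}\text{-QE}$ (completing (i)) and, letting $\bar c$ be arbitrary, $\mathcal X_A\text{-QE} \Rightarrow$ (ii).

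For (iii), ``QE $\Rightarrow$ saturated'' is a compactness argument in the ambient $\mathcal U$: a consistent type in $\mathcal X_A$ over a small $D \subseteq X$ reduces by QE to its quantifier-free part, which together with the type-definition of $X$ assembles into a consistent partial $\mathcal U$-type, realized in $X$ by saturation of $\mathcal U$. For ``saturated $\Rightarrow$ QE'' I invoke the standard criterion that in a saturated model, QE follows from ``same quantifier-free type implies same type''. Two tuples in $X^n$ have the same quantifier-free $\mathcal X_A$-type iff they have the same $\mathcal U$-type over $A$; by $\mathcal U$-saturation they are conjugate by some $\sigma \in \mathrm{Aut}(\mathcal U/A)$, and $\sigma$ restricts to an $\mathcal X_A$-automorphism (since $X$ is $A$-invariant), yielding the same full $\mathcal X_A$-type.

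The main subtlety is the ``hard'' half of (i)-(ii): checking that the uniform formula $\psi$ extracted from QE of $\mathcal X_A$ continues to compute the projection correctly when $\bar z = \bar c$ lies outside $X$. Lemma~\ref{invariant} is the crucial ingredient here, using the $A\bar c$-invariance of $\pi(Y)$ to pin down $\psi(\bar x, \bar c)$ as the unique relatively definable set that can agree with it.
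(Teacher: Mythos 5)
The ``easy'' direction of (i), the derivation of (ii), and part (iii) are all essentially the paper's argument. But the ``hard'' half of (i) --- passing from QE of $\mathcal X_A$ to QE of $\mathcal X_B$ when the new parameters $\bar c$ live outside $X$ --- has a genuine gap in your proposal.

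You apply QE of $\mathcal X_A$ to an $A$-definable family $\phi(\bar x,y,\bar z)$ with $\bar z$ ranging over $X$, get an $A$-formula $\psi(\bar x,\bar z)$ computing the projection for $\bar z\in X$, and then assert that $\psi(\bar x,\bar c)$ still computes $\pi(Y)$ at $\bar c\notin X$ ``using $A\bar c$-invariance of both sides together with Lemma~\ref{invariant}.'' That last step does not go through. Lemma~\ref{invariant} takes a set that you already know to be \emph{relatively definable} and, from invariance, downgrades the parameters; it gives no mechanism to show that $\pi(Y)$ is relatively definable in the first place, nor to identify it with $\psi(\bar x,\bar c)\cap X^n$. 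Both $\pi(Y)$ and $\psi(\bar x,\bar c)\cap X^n$ are $A\bar c$-invariant, but there is no reason they coincide: the guarantee from QE only covers $\bar z\in X$, and a single formula $\psi$ certified there can perfectly well misbehave at an external $\bar c$.

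The missing ingredient is stability, specifically \emph{definability of types}, which the paper invokes precisely at this point: since $\mathcal U$ is stable, a set $Y\subseteq X^{n+1}$ relatively definable with external parameters $b$ can be re-parameterized so that $Y$ is relatively defined by some $L$-formula $\psi(\bar x,y,\bar z)$ evaluated at a tuple $c$ lying \emph{inside} $X$. Only then does QE of $\mathcal X_A$ (applied to $\psi$ with all variables, including $\bar z$, ranging over $X$) produce a relatively $A$-definable $\chi(\bar x,\bar z)$ whose specialization $\chi(\bar x,c)$ genuinely defines $\pi(Y)$; and from there Lemma~\ref{invariant} closes the argument by replacing $c$ with parameters from $B$. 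Without first pulling the parameters into $X$, the step from $\psi(\bar x,\bar z)$ with $\bar z\in X$ to $\psi(\bar x,\bar c)$ with $\bar c\notin X$ cannot be justified.
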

\begin{proof} (i) Right implies left follows from Lemma \ref{invariant} as a
  projection of an $A$-invariant set is also $A$-invariant.  For left to
  right: Suppose $Y\subset X^{n+1}$ is relatively definable 
over $B$, by $\phi(x_{1},..,x_{n+1},b)$ where we witness the parameters $b\in B$, which may live outside $X$.  Let $Y_{1}$ be the projection of $Y$ to $X^{n}$. By Lemma \ref{invariant} it 
suffices to prove that $Y_{1}$ is relatively definable (as clearly it is
invariant under automorphisms of $\cal U$ fixing $b$ pointwise). Now by
definability of types we may find some 
$L$-formula $\psi(x_{1},..,x_{n+1}, z)$  and $c$ from $X$ such that $Y$ is relatively definable by $\psi(x_{1},..,x_{n+1},c)$.  Let  $z$ be  an $m$-tuple of variables. Let $Y'$ be the 
subset of $X^{n+1+m}$ relatively defined by $\psi$. By our assumption that $X_{A}$ has QE, the projection $Y''$ of $Y$ obtained by existentially quantifying out $x_{n+1}$, is a relatively 
$A$-definable subset of $X^{n+m}$,  (relatively) defined by a formula $\chi(x_{1},..,x_{n},z)$ say. So $\chi(x_{1},..,x_{n},c)$ relatively defines $Y_{1}$, as required. 
\newline
(ii) By (i) and Lemma \ref{invariant}
\newline
(iii)  The point is that to say that $X_{A}$ has QE means precisely that if $b_{1}, b_{2}$ are $n$-tuples from $X$, then $tp(b_{1}/A) = tp(b_{2}/A)$ in the sense of ${\cal U}$ iff $tp(b_{1}) = tp(b_{2})$ in the sense of ${\cal X}_{A}$.

\end{proof}

Note that in general ${\cal X}_{A}$ is only quantifier-free saturated (and homogeneous) and is sometimes referred to as a Robinson structure. \\

Of course when $X$ is definable (rather than type-definable) over $A$, then $X_{A}$ always has QE and is  referred to as ``$X$ with its induced structure". \\

Using Theorem \ref{wagner} applied to the $G_i$'s with their induced
structure, we deduce easily:
\begin{Corollary}\label{Corwagner} Suppose $G$ is a (saturated) commutative, connected, group of finite Morley rank (with additional structure) which is a sum of finitely many $g$-minimal $\emptyset$-definable subgroups $G_{i}$. Then any  algebraically closed subset of $G$ which meets each $G_{i}$ in an infinite set, is an elementary substructure of $G$.
\end{Corollary}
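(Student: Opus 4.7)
The plan is to verify the Tarski--Vaught criterion for $H \preceq G$, using the decomposition $G = G_1 + \cdots + G_n$ to reduce to the $g$-minimal situation handled by Theorem \ref{wagner}.

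First I set $H_i := H \cap G_i$ for each $i$. Since $G_i$ is $\emptyset$-definable, $H_i$ is algebraically closed in $G_i$ (computed in the induced structure on $G_i$), and by hypothesis $H_i$ is infinite. Applying Theorem \ref{wagner} to each $G_i$, which is in its induced structure a commutative connected $g$-minimal group of finite Morley rank, yields $H_i \preceq G_i$. Since $H$ is algebraically closed and the group operations on $G$ are $\emptyset$-definable, $H$ is in particular a subgroup of $G$.

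Next I establish the key decomposition $H = H_1 + \cdots + H_n$. After discarding any redundant summands, the sum map $\sigma \colon G_1 \times \cdots \times G_n \to G$ has finite kernel: $g$-minimality of $G_i$ forces $G_i \cap \sum_{j \ne i} G_j$ to be either finite or all of $G_i$, and in the latter case $G_i$ is redundant and can be dropped. Hence $\sigma$ is an isogeny, and for every $h \in H$ the fiber $\sigma^{-1}(h)$ is a finite set algebraic over $h$, whose coordinates therefore lie in $\mathrm{acl}(H) \cap G_i = H_i$.

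For the Tarski--Vaught step, take $\phi(x, \bar{a})$ with $\bar{a} \in H$ realized by some $c \in G$. Decompose $c = c_1 + \cdots + c_n$ with $c_i \in G_i$, and coordinatewise $\bar{a} = \bar{a}_1 + \cdots + \bar{a}_n$ with $\bar{a}_i$ a tuple from $H_i$. It suffices to produce $c'_i \in H_i$ satisfying $\phi(c'_1 + \cdots + c'_n, \bar{a}_1 + \cdots + \bar{a}_n)$, since then $c' := \sum_i c'_i$ lies in $H$ and witnesses Tarski--Vaught for $\phi(x, \bar{a})$.

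To produce the $c'_i$ iteratively I use the natural relativization of Theorem \ref{wagner}: if $G_1$ is $g$-minimal and one considers its induced structure over a parameter set $B \subseteq G$, then $G_1$ is still $g$-minimal in that structure and any subset of $G_1$ algebraically closed over $B$ (in $G$) and infinite is an elementary substructure of $G_1$ over $B$. Taking $B = H_2 \cup \cdots \cup H_n$, we have $\mathrm{acl}(H_1 \cup H_2 \cup \cdots \cup H_n) \cap G_1 \subseteq \mathrm{acl}(H) \cap G_1 = H_1$, so $H_1$ is algebraically closed over $B$; the relativized Wagner then gives Tarski--Vaught for any subset of $G_1$ definable over $H_1 \cup \cdots \cup H_n$. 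The projection to $G_1$ of the realized relation $\phi(x_1 + \cdots + x_n, \bar{a}_1 + \cdots + \bar{a}_n)$, with $x_j$ ranging over $G_j$, is such a subset, so we obtain $c'_1 \in H_1$. Iterating $n-1$ more times, absorbing each $c'_j$ into the parameter set as we go, produces all the $c'_i$. The main obstacle is verifying this relativized form of Theorem \ref{wagner} and keeping track of the parameter sets along the iteration; this is controlled by the observation that throughout, all accumulated parameters remain inside $H$, so each $H_i$ remains algebraically closed over whatever extended parameter set is in play.
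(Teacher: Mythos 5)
Your proof is correct, and since the paper simply asserts the corollary ``we deduce easily'' without writing anything out, you are genuinely filling in a gap rather than competing with a written argument. The structure---reduce Tarski--Vaught for $H$ in $G$ to $n$ successive applications of Wagner's theorem, one per $G_i$, with the parameter set growing along the way---is exactly what the phrase ``applied to the $G_i$'s with their induced structure'' is gesturing at.

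The one step you rightly flag as needing justification, the ``relativized'' Theorem~\ref{wagner}, does hold and for the reason you indicate. Passing from the induced structure on $G_i$ over $\emptyset$ to the induced structure over a parameter set $B\subseteq G$ only enlarges the class of definable sets, all of which are still traces of sets definable (with parameters) in the ambient model; since $g$-minimality, connectedness and finite Morley rank are all stated over arbitrary parameters in the ambient model, they persist. The algebraic-closure bookkeeping $\operatorname{acl}(H_i\cup B)\cap G_i\subseteq\operatorname{acl}(H)\cap G_i=H_i$ (valid because $B\subseteq H$ throughout the iteration, including after absorbing the $c'_j$'s) is the correct check, and it gives that $H_i$ remains an infinite algebraically closed subset of $G_i$-over-$B$, hence an elementary substructure of it.

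Two remarks on economy. First, the whole isogeny argument and the decomposition $H=H_1+\cdots+H_n$ are not actually needed: you only used them to split $\bar a$ into tuples $\bar a_i\in H_i$, but you may just as well take $B=\bar a$ (or $B=H$) directly as the parameter set and run the same iteration, since all that matters is $B\subseteq H$. Second, the reduction to the case where the sum map has finite kernel is fine but also dispensable for the same reason; if you do keep it, note that dropping a redundant $G_i$ is legitimate because the remaining $G_j$'s still sum to $G$ and the hypothesis ``$H\cap G_j$ infinite'' is only needed for the summands you retain. Neither point is an error---just extra machinery.
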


\bigskip

In positive characteristic, in order to apply \ref{wagner} and Corollary
\ref{Corwagner}, to $A^\sharp$, we will need to show that $A^\sharp$ has
quantifier elimination, in the sense above. For example, we know
(\ref{FactAsharpbasic}) that if $A$ is a simple abelian variety,
  then $A^\sharp $ has no proper relatively definable subgroup,, but we will
  need to know  that the group $A^\sharp$, equipped with predicates for relatively definable sets,  is a $g$-minimal group, as a first order structure in its own right; QE says that every definable subgroup of this first order structure $A^\sharp$ is  relatively definable (in $\cal U$), which is exactly what we need.

Now the (generalized)
Zariski geometry arguments  from \cite{Hrushovski} give the dichotomy
theorem for minimal ``thin" types in separably closed fields, implying
that if $A$ is simple with $k$-trace $0$, then $A^{\sharp}$ is
minimal (i.e. $U$-rank $1$), connected, and $1$-based, from which it
easily follows that $A^{\sharp}$ has QE. For arbitrary traceless abelian
varieties $A$, $A^{\sharp}$ will be a sum of such minimals, hence also
$1$-based and  so we also
have QE. So the QE hypothesis is true, after the fact so to speak, once one knows $A^\sharp $ is one-based.

 Our method of proving QE is related to
proofs that minimal thin types in $SCF$ are Zariski, but does not use
the dichotomy theorem for (type-definable) Zariski geometries (nor in
fact  even the so called dimension theorem). In fact we prove QE for
$A^\sharp$ directly, for {\em any abelian variety} over a separably
closed field of imperfection degree $1$, without any assumption on the
trace. The fact that $A$ is an abelian variety, and not a semiabelian
variety plays an essential role, though. Indeed,  we can define $G^{\sharp}$ in the same way for
semiabelian varieties $G$. But building on work in \cite{BBP}, Alexandra
Omar Aziz, in \cite{OA} gave semiabelian examples $G$ for which
$G^{\sharp}$ does not have QE.

\section{Quantifier elimination for $A^\sharp$ in characteristic $p$}\label{QEsection}

Here we prove the quantifier elimination result in full generality. As
this is the case we need, and it simplifies notation, we keep the
assumption that we are working with a separably closed field of degree of
imperfection one, but the same proof will work  for any finite non
zero degree of imperfection.   

\begin{Theorem} \label{QEthm}
Let $A$  be an  abelian variety over any  separably
closed field $K$ of degree of imperfection $1$, and $\cal U$ be a saturated extension of 
$K$. We consider $A^\sharp=p^{\infty} A({\cal U})$, and we denote by $\cal A$ the structure $A^\sharp$ with relatively definable sets (with 
parameters from $K$). Then $\cal A$ has quantifier elimination.
\end{Theorem}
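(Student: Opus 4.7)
My plan is to use Lemma~\ref{inducedQEfacts}(ii) and reduce QE to showing that for any relatively $K$-definable $Y\subseteq (A^\sharp)^{n+1}$, the projection $\pi(Y)\subseteq (A^\sharp)^n$ is relatively $K$-definable (a priori it is only $K$-type-definable). I will work in the expanded language of separably closed fields with a strict iterative Hasse--Schmidt derivation, in which $SCF_{p,1}$ admits quantifier elimination and elimination of imaginaries. In this language, a relatively $K$-definable subset of $(A^\sharp)^{n+1}$ is cut out on $(A^\sharp)^{n+1}$ by finitely many Hasse--Schmidt polynomial conditions in the coordinates.

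The central algebraic input is the factorization of multiplication by $p$ on an abelian variety in characteristic $p$: $[p]_A = V_A\circ F_A$, where $F_A\colon A\to A^{(p)}$ is the relative Frobenius (purely inseparable of degree $p^{\dim A}$) and $V_A\colon A^{(p)}\to A$ is the Verschiebung. An element $a\in A^\sharp$ being $p^n$-divisible for every $n$ is equivalent to $a$ being $V^n\circ F^n$-divisible, and this in turn forces the Hasse--Schmidt jets of $a$ to satisfy a compatible infinite system of ``$\sharp$-conditions'' expressing the existence, inside $A^\sharp({\cal U})$, of inverse-Frobenius lifts of the $V$-antecedents of $a$. The plan is to show that, relative to such a pro-algebraic presentation, any relatively $K$-definable subset of $(A^\sharp)^{n+1}$ coincides with the pullback of a $K$-constructible subset of a finite level $\widetilde A_N^{n+1}$ of the inverse system $\varprojlim_N (A,[p])$. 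Once this presentation is in place, $\pi(Y)$ pulls back to an ordinary algebraic projection on $\widetilde A_N^n$, which is constructible by Chevalley, and so its image in $(A^\sharp)^n$ is relatively $K$-definable.

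The main obstacle I expect is the finiteness step: showing that a given relatively $K$-definable set is already determined at some finite level $N$ of the inverse system, i.e.\ that finitely many Hasse--Schmidt jets of the coordinates suffice to describe it. In a Zariski-geometric approach this finiteness is packaged into the dichotomy theorem, which is precisely what the paper wishes to bypass. Here the argument should exploit that $A$ is an abelian variety, and not merely semiabelian --- the rigidity of isogenies of abelian varieties (e.g.\ through the theorem of the cube) should give enough control on how the iterated $F^{-1}$-lifts assemble compatibly across the chain of $V$-antecedents, compensating for the absence of the dichotomy. The necessity of the abelian (as opposed to semiabelian) hypothesis at this point is consistent with the Omar Aziz counterexample, which shows that the analogous QE fails for $G^\sharp$ when $G$ is a general semiabelian variety.
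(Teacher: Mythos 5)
There is a genuine gap, and it sits exactly where you flag it, but your proposed fix does not work. The issue is not about expressing a relatively definable subset $Y\subseteq (A^\sharp)^{n+1}$ at a finite level --- that much is automatic from quantifier elimination in $SCF_{p,1}$: $Y=Z\cap(A^\sharp)^{n+1}$ for some $K$-definable $Z$, and $Z$ is controlled by finitely many $\lambda$-jets. The problem is the existential quantifier ranging over $A^\sharp$. Membership in $A^\sharp$ is the infinite conjunction of $p^m$-divisibility conditions, so $\pi(Y)$ is a priori only type-definable: at each level $m$ you can apply Chevalley and get a constructible set $C_m$, but $\pi(Y)$ is the intersection of the pullbacks of all the $C_m$'s, and you have no reason for this descending chain to stabilize. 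Your paragraph on ``rigidity of isogenies via the theorem of the cube'' does not address this: rigidity statements constrain morphisms between abelian varieties, not descending chains of constructible sets in an inverse system, and no amount of rigidity of the $[p]=V\circ F$ factorization produces the needed stabilization. (Also note a smaller imprecision: $A^\sharp$ is not $\varprojlim(A,[p])(\mathcal U)$; it is the set of $a$ whose $\lambda_m$-jets lie in the algebraic subgroups $A_m\subseteq\Lambda_m A$ of \cite{BenoistDelon}, which is what actually gives the pro-algebraic description.)

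The paper replaces the missing finiteness by two separate ingredients, and it is worth seeing why each is necessary. First, Proposition~\ref{Noetherian}: the $\lambda$-topology on every $(A^\sharp)^n$ is Noetherian of finite dimension. This is proved by decomposing $A^\sharp=\sum A_i^\sharp$ with $A_i$ simple, writing each $A_i^\sharp$ as a finite sum of realizations of a thin minimal type (Zilber indecomposability), and quoting Hrushovski's Noetherianity result for Cartesian powers of thin minimal types; Claims~\ref{notorthogonal} and~\ref{orthogonal} then transfer Noetherianity across mixed products by sorting the minimal types into non-orthogonality classes. Nothing in your sketch produces this, and it is exactly what makes infinite descending chains of closed sets stabilize. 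Second, Proposition~\ref{closed}: projections of $\lambda$-closed sets along $A^\sharp$ are again $\lambda$-closed. Here the abelian hypothesis enters, but concretely: $p^m A(\mathcal U)=\lambda_m^{-1}(A_m)$ with $A_m$ isogenous to $A$, hence \emph{complete} as an algebraic variety, and it is this completeness (not any rigidity theorem) that makes the level-$m$ projections closed and compatibly so. Omar Aziz's counterexample is consistent with this diagnosis because the $A_m$-analogues for a general semiabelian $G$ are not complete. Finally, even after you have closed projections and Noetherianity you still need to handle constructible sets, and the paper does a dimension-and-fiber induction (using Lemma~\ref{basictopology}) in the style of QE for one-dimensional Zariski geometries; Chevalley at a single level is not a substitute, for the same infinite-conjunction reason as above. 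So your high-level intuition about where the abelian hypothesis matters is right, but both load-bearing steps --- Noetherianity of the $\lambda$-topology and completeness of the $A_m$'s --- are absent from your argument.
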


\noindent Note that by Lemma \ref{inducedQEfacts}, it will follow that ${\cal A}_{\cal U}$ has quantifier elimination as well. But, conversely, Lemma \ref{inducedQEfacts} says that, in order to prove the theorem for $\cal A$, {\em  we can suppose, for this proof only,  that $K$ itself is sufficiently 
saturated}. This will be used  in Claim \ref{notorthogonal}. \\

We will work in the language of rings augmented by $\lambda$-functions and a constant $t$ for the $p$-basis (see \cite{Delon} as reference).
It means that for each $a$ in the field, $a=\sum_{i\in p^n} \lambda_{n,i}(a)^{p^n}M_i$, where elements of $p^n$ are seen as sequences of length $n$ with values between $0$ and $p-1$, and for $i\in p^n$ and $j\in p$, $M_j=t^j$, $M_{i\conc j}=M_iM_j^{p^n}$ and $\lambda_{n+1,i\conc j}=\lambda_{1,j}\circ \lambda_{n,i}$. The tuple $a_{=n}:=(\lambda_{n,i}(a))$ is called the tuple of $\lambda$-components of level $n$, and we denote $a_{\le n}=(a_{=m})_{0\le m \le n}$, $a_{\infty}=(a_{=n})_{n\ge 0}$.\\
Similarly, for $X$ a multivariable, $X_{\infty}=(X_i)_{i\in p^n, n\ge 0}$. Let $I^0(X)$ be the ideal of $K[X_\infty]$ generated by the polynomials $X_i-\sum_{j\in p}X_{i\conc j}^pM_j$, for all $i\in p^n$, $n\ge 0$. A complete type over $K$ corresponds bijectively to a prime separable ideal of $K[X_\infty]$ containing $I^0(X)$. An ideal $I$ of $K[X_\infty]$ is separable if and only if whenever $\sum_{j\in p} P_j^p M_j$ is in $I$, each $P_j$ is in $I$.\\
We will use the $\Lambda_n$ functors, from the category of algebraic varieties (over $K$) into itself, as defined in \cite{BouscarenDelon1}. For each algebraic variety $V$, we have a natural definable map $\lambda_n:V({\cal U}) \to \Lambda_nV({\cal U})$ and a natural morphism $\rho_n :\Lambda_n V \to V$ which define reciprocal bijections between $V({\cal U})$ and $\Lambda_nV({\cal U})$ (the notation $\lambda_n$ is coherent with the previous one for the affine line). Note that $\Lambda_n$, $\lambda_n$ and $\rho_n$ are obtained from $\Lambda_1$, $\lambda_1$ and $\rho_1$ by the suitable composition. We equip each $V({\cal U})$ with the $\lambda$-topology, whose basic closed sets are $\lambda_m^{-1}(V_m)$ for any $m\ge 0$ and $V_m$ any algebraic subvariety of $\Lambda_m V$. Note that for any $n\ge 0$, $\Lambda_n V_m$ is an algebraic subvariety of $\Lambda_{m+n}V$, and $\lambda_m^{-1}(V_m)=\lambda_{m+n}^{-1}(\Lambda_n V_m)$. It follows that finite unions of basic closed sets are still basic closed; we obtain arbitrary closed sets by possibly infinite intersection. It follows from quantifier elimination in the theory of separably closed fields in this language that definable subsets of $V({\cal U})$ are boolean combinations of definable (i.e. basic) closed sets.\\  
If $A$ is an abelian variety, $\Lambda_m A$ is an algebraic group, but is not an abelian variety. But 
from \cite{BenoistDelon}, we know that for each $m\ge 0$, there is an algebraic subgroup $A_m \subset \Lambda_m A$, which is isogenous to $A$, such that $p^mA({\cal U})=\lambda_m^{-1}(A_m)$ (it was shown there using the formalism of Weil's restriction of the scalars $\Pi_{K/K^{p^m}}A$, which is known to coincide with $F^{(m)}\Lambda_m A$, $F^{(m)}$ being the $m$-th power of the absolute Frobenius).

\medskip 
Recall that $A$ is a sum of simple abelian varieties, $A=\Sigma_{1\leq i\leq n}  A_i$. 
We will  use the following consequence of the Zilber
indecomposability theorem (see Fact 3.8 in \cite{BBP}): as $A_i$ is
simple, ${A_i}^\sharp $ has no proper infinite type-definable subgroups (Fact \ref{FactAsharpbasic}) and hence, there exists a minimal type in ${A_i}^\sharp$
whose set of realizations, which  will be denoted by $Q_i$ in the
following, is such that ${A_i}^\sharp= Q_i+\ldots+Q_i$ ($m_i$
times for some $m_i$, that is, any element of ${A_i}^\sharp$ is the sum
of $m_i$ elements from $Q_i$).

\medskip 

First we recall some basic facts about the group 
$A^\sharp$ in characteristic $p$, which will be used here and in Section
\ref{Char.p}

We refer the reader to \cite{BBP} where we give a precise account of
relative Morley rank for type-definable sets in a stable structure
(called ``internal Morley dimension" in \cite{Hrushovski}).

\begin{Fact}\label{basicAsharpcharp} Both $A^{\sharp}$ and the
  $A_{i}^{\sharp}$ are connected groups,
  with finite relative Morley rank. Moreover, $A_{i}^{\sharp}$ is the
  connected component of $A^\sharp \cap A_{i}$ and is relatively
  definable in $A^{\sharp}$. 
\end{Fact}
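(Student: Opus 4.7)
The assertions that $A^\sharp$ and each $A_i^\sharp$ are connected and of finite relative Morley rank can be extracted from Fact~\ref{FactAsharpbasic} and the detailed treatment in \cite{BBP}. For connectedness, the plan is to invoke Remark~2.3, which identifies $A^\sharp$ with the maximal divisible subgroup of $A(\mathcal U)$: since a divisible abelian group admits no proper subgroup of finite index, the same holds a fortiori for relatively definable ones, and the same reasoning applies to each $A_i^\sharp$. For finite relative Morley rank, I will use the presentations $p^nA(\mathcal U)=\lambda_n^{-1}(A_n)$ and $p^nA_i(\mathcal U)=\lambda_n^{-1}(A_{i,n})$ via the $\Lambda_n$ functors, which yield the required rank bounds in terms of $\dim A$. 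The substantive content is therefore the last sentence: $A_i^\sharp$ is relatively definable and is the connected component of $A^\sharp\cap A_i$.

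To prepare for this I will refine the decomposition $A=A_1+\cdots+A_n$ by Poincar\'e reducibility so that each $A_i$ admits an isogeny complement $B_i:=\sum_{j\ne i}A_j$ with $F_i:=A_i\cap B_i$ finite; set $m=|F_i|$. The inclusion $A_i^\sharp\subseteq A^\sharp\cap A_i$ is immediate. The first key step is to show that $(A^\sharp\cap A_i)/A_i^\sharp$ is $m$-torsion, hence finite: given $a\in A^\sharp\cap A_i$, for each $n\ge 0$ pick $b_n\in A(\mathcal U)$ with $p^nb_n=a$ and decompose $b_n=x_n+y_n$ with $x_n\in A_i(\mathcal U)$ and $y_n\in B_i(\mathcal U)$. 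Then $p^ny_n=a-p^nx_n\in A_i\cap B_i=F_i$, so $ma=p^n(mx_n)\in p^nA_i(\mathcal U)$ for every $n$, giving $ma\in A_i^\sharp$. The quotient is then finite since the $m$-torsion of $A$ is finite.

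The second step upgrades type-definability to relative definability. Writing $A_i^\sharp=\bigcap_{n\ge 0}H_n$ with $H_n:=A^\sharp\cap p^nA_i(\mathcal U)$, each $H_n$ is relatively definable in $A^\sharp$ (because $p^nA_i(\mathcal U)$ is definable in $SCF_{p,1}$), the $H_n$ decrease, and they all lie between $A_i^\sharp$ and $A^\sharp\cap A_i$; so each index $[H_n:A_i^\sharp]$ is a positive integer at most $m$. This chain of indices must stabilize, forcing the chain of $H_n$ to stabilize, and since $\bigcap_nH_n=A_i^\sharp$ we conclude that $A_i^\sharp=H_N$ for some $N$ and is therefore relatively definable. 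Combined with the finite-index bound from the first step and connectedness of $A_i^\sharp$, this exhibits $A_i^\sharp$ as the connected component of $A^\sharp\cap A_i$. The only nontrivial external input is the Poincar\'e-reducibility refinement making $A_i\cap B_i$ finite; the rest is direct manipulation together with the elementary descending chain argument on a bounded chain of finite-index subgroups.
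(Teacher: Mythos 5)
Your argument is correct, and it takes a genuinely different route to the key assertion than the paper does. The paper simply cites the proof of Fact~3.8 of \cite{BBP} and then observes that, \emph{once finite relative Morley rank is known}, the relative definability of $A_i^\sharp$ follows from general finite-Morley-rank group theory (the connected component of a relatively definable subgroup of a type-definable group of finite relative Morley rank is again relatively definable). Your proof instead bypasses the Morley-rank machinery for this step: you use Poincar\'e reducibility to arrange a complement $B_i$ of $A_i$ with $A_i\cap B_i$ finite, show directly that $(A^\sharp\cap A_i)/A_i^\sharp$ is killed by $m=|A_i\cap B_i|$ hence bounded by $|A_i[m]|$ (note your bound should be $|A_i[m]|$ rather than $m$, but any finite bound suffices), and then run a descending-chain argument on the relatively definable groups $H_n = A^\sharp\cap p^nA_i(\mathcal U)$, whose indices over $A_i^\sharp$ are bounded and non-increasing, so the chain stabilizes at $A_i^\sharp$. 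This is more elementary and self-contained than the route through \cite{BBP}, whereas the paper's approach is shorter given that the relative-Morley-rank machinery has already been set up. What you buy is an explicit and hands-on proof of relative definability that does not presuppose the Morley-rank apparatus; what you lose is brevity, and you still defer the actual verification of finite relative Morley rank (via the $\Lambda_n$ presentations) to the same external source, so your saving is confined to the last sentence of the Fact.
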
 
\noindent
{\em Comments.}    The fact that $A^\sharp $ has finite relative Morley
rank is  claimed in Remark 2.19 of \cite{Hrushovski}. However it is also
implicitly claimed there that  $G^{\sharp} = p^{\infty}(G(\cal U))$ also
has finite relative Morley rank, whenever $G$ is {\em semiabelian}, and
this is actually wrong, as pointed out in \cite{BBP}. So we refer the
reader rather to the proof of Fact 3.8 from \cite{BBP}. It is worth
remarking that it is this  semiabelian counterexample without relative
Morley rank which is shown, in 
\cite{OA},  not to have QE. But in general there is no reason why a
type-definable group of finite relative Morley rank should have QE. 
Once we know that $A^\sharp$ has relative   finite Morley rank, as, for each $i$,
$A_{i}^{\sharp}$ is the connected component of $A_{i}\cap A^{\sharp}$,
it follows that $A_{i}^{\sharp}$ is relatively definable in
$A^{\sharp}$, but again this is no longer true in the semiabelian 
  counterexample.\\

\begin{Proposition} \label{Noetherian} 
For every $n\geq 1$, the $\lambda$-topology on  $(A^\sharp)^n$ is Noetherian of finite dimension.
\end{Proposition}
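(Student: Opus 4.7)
The plan is to transfer the question to the algebraic varieties $(A_m)^n$ via the injective maps $\lambda_m$ restricted to $A^\sharp$. For a closed subset $C\subseteq (A^\sharp)^n$ in the $\lambda$-topology, set $T_m(C)$ to be the Zariski closure of $\lambda_m(C)$ in $(A_m)^n$. Factoring $\lambda_m$ through $\lambda_{m+1}$ via the natural morphism $\Lambda_{m+1}A \to \Lambda_m A$, and using the projectivity of abelian varieties (so that $A_{m+1}$ and $A_m$, being isogenous to $A$, are abelian varieties), one checks that this natural morphism restricts to a surjection $A_{m+1}\to A_m$ which sends $T_{m+1}(C)$ onto $T_m(C)$. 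Since $A_{m+1}$ and $A_m$ have the same dimension, this restriction is a finite morphism (an isogeny), so $\dim T_{m+1}(C)=\dim T_m(C)$; call this common value $d(C)$, bounded by $n\dim A$. As basic closed sets generate the topology, $C=(A^\sharp)^n\cap \bigcap_m \lambda_m^{-1}(T_m(C))$, so the inverse system $(T_m(C))_m$ determines $C$; in particular, if $C'\subsetneq C$ are closed then $T_m(C')\subsetneq T_m(C)$ for some $m$.

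Finite dimension then follows quickly: for a strict chain $C_0\supsetneq C_1\supsetneq\cdots\supsetneq C_r$ of irreducible closed sets, pick $m_i$ witnessing each strict inclusion and set $m:=\max_i m_i$; the chain $T_m(C_0)\supsetneq\cdots\supsetneq T_m(C_r)$ consists of irreducible algebraic subvarieties of $(A_m)^n$ (closures of images of irreducibles) with strictly decreasing dimensions, forcing $r\le n\dim A$.

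For Noetherianity, which I expect to be the main obstacle, the plan is to combine the above with the finite relative Morley rank of $(A^\sharp)^n$ provided by Fact \ref{basicAsharpcharp}. Each basic closed set $(A^\sharp)^n\cap\lambda_m^{-1}(V_m)$ is relatively definable in $(A^\sharp)^n$ and has finite relative Morley rank. A descending chain of closed sets gives a weakly decreasing sequence of relative ranks, which stabilizes. Once stable, I would proceed by induction on the dimension $d$: at each level $m$, the $d$-dimensional irreducible components of $T_m(C_i)$ form a finite set whose size is bounded by the multiplicities/degree of the ambient $(A_m)^n$, so the top-dimensional part of the chain stabilizes levelwise at each $m$; compatibility across levels via the natural morphisms, together with the fact that $C$ is determined by its inverse system, forces the top-dimensional part of the chain to stabilize globally, and the residual lower-dimensional chain is handled by induction on $d$. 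The hardest step is obtaining uniformity in $m$ — moving from levelwise algebraic stabilization to a single uniform level — where the finite relative Morley rank of $(A^\sharp)^n$, rather than just the Zariski dimension bound $n\dim A$, furnishes the required single integer-valued invariant controlling the complexity of closed sets.
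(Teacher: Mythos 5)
Your finite-dimension argument is a legitimate alternative to the paper's. The paper derives finite dimension from thinness: every irreducible closed subset of $(A^\sharp)^n$ corresponds to a prime separable ideal of bounded transcendence rank, which bounds lengths of chains of irreducibles once Noetherianity is known. You instead push closed sets forward to Zariski closures $T_m(C)\subseteq (A_m)^n$ and use that the natural morphisms $A_{m+1}\to A_m$ are surjective with finite kernel (so dimensions are constant across levels and strict inclusions propagate upward), giving the bound $n\dim A$ directly without first needing Noetherianity. Modulo the unverified but plausible claim that $\rho_1$ restricts to an isogeny $A_{m+1}\to A_m$ (this does use $A$ abelian: $A_{m}$ is an abelian variety and a surjection between abelian varieties of the same dimension is an isogeny), this part is fine, and it is a somewhat more self-contained route than the paper's.

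The Noetherianity argument, however, has a genuine gap, and it is precisely the one you flag. Knowing that the relative Morley rank of a descending chain of type-definable subsets eventually stabilizes at some value $r$ does not by itself stop the chain: there can be infinitely many closed subsets of rank $r$. Nor does your finite-dimension bound help: a topological space can have bounded lengths of irreducible chains and still fail to be Noetherian (think of the spectrum of a non-Noetherian zero-dimensional ring). Your proposed ``levelwise then global'' stabilization never addresses the actual difficulty, which is that the level $m$ at which $T_m(C_i)\supsetneq T_m(C_{i+1})$ is first witnessed may grow without bound with $i$; asserting that finite relative Morley rank ``furnishes the required single integer-valued invariant'' is a wish, not an argument. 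There is also a latent circularity: relative Morley rank is a priori defined for \emph{relatively definable} sets, and a closed set in the $\lambda$-topology is only type-definable in general; the statement that closed sets are relatively definable is a \emph{consequence} of the Noetherianity you are trying to prove, so you cannot freely apply rank to arbitrary closed sets in the chain.

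The paper's proof of Noetherianity goes a completely different way, and the missing input in your proposal is exactly what it supplies. Using the isogeny decomposition $A=\sum_i A_i$ into simples and Zilber's indecomposability theorem, one writes $A^\sharp$ as a finite sum of sets of realizations $Q_i$ of \emph{thin minimal} types, obtaining a continuous relatively definable surjection $(Q_1^{m_1}\times\dots\times Q_n^{m_n})^d\twoheadrightarrow (A^\sharp)^d$. Noetherianity of the $\lambda$-topology on Cartesian powers of a single thin minimal type is Hrushovski's result, and the paper's Claims \ref{notorthogonal} and \ref{orthogonal} bootstrap this to mixed products $Q_1\times\dots\times Q_n$, splitting the $q_i$ into non-orthogonality classes and using orthogonality to show closed sets decompose as finite unions of boxes. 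Noetherianity of $(A^\sharp)^n$ then follows since the image of a Noetherian space under a continuous surjection onto a $T_1$-ish space in this setting is Noetherian. If you want to keep your framework, you would still need to import Hrushovski's Noetherianity for thin minimal types or prove an equivalent; without it, the uniformity-in-$m$ problem remains open in your write-up.
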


Note that, in the examples of  semiabelian varieties which do not have
finite relative finite Morley rank given in \cite{BBP},  the topology is not Noetherian.

It was  shown in \cite{Hrushovski}  that the trace of the
$\lambda$-topology on each $Q^k$ is Noetherian and has finite dimension, if $Q$ is a {\em thin} minimal type (see below), which is the case here. Now the Noetherianity for ${A_i}^\sharp$
follows as we have a continuous relatively definable surjective map from 
${Q_i}^{md}$ onto $ {{A_i}^{\sharp}}^d$.

\medskip 

\noindent Passing to $A^\sharp$ itself will be a little more complicated.

\medskip 
\noindent Let us first note that, once we know that the topology is
Noetherian,  it is easy to see that it is finite dimensional, that is,
that every closed set has finite topological
dimension. Indeed, by Noetherianity, every closed set is a finite union
of irreducible closed sets, and in $(A^\sharp)^n $,  there is a finite bound on the length of
strictly decreasing  sequences of {\em irreducible} closed sets. This follows
from ``thinness'' of the types involved. Recall that if $a\in (A^\sharp)^n $ then the type of $a
$ over $K$ is {\em thin}, that is, the field generated over $K$ by $a$
and its images under the $\lambda$-functions has finite transcendence degree
over $K$ (\cite{Hrushovski} or \cite{BD}). More precisely, the
transcendence rank of the prime separable ideal $I(a/K)$, of all
$\lambda$-polynomials vanishing at $a$, has finite transcendence rank
smaller than $(dim A) ^n$.  
Now if $F\subsetneq G\subseteq (A^\sharp)^n $ are two irreducible closed sets
and $I(G) \subsetneq I(F)$ are the associated  two prime separable  ideals  
then, the transcendence rank of both are finite and  bounded by $(dim A) ^n$
and the transcendence rank of $I(F)$ must be strictly smaller than the
transcendence rank of $I(G)$. 

\medskip

So, in order to prove  Proposition \ref{Noetherian},   we just need to show Noetherianity.

\begin{Lemma}\label{cartesian} Let $p_1,\ldots, p_k$ be thin minimal types over $K$ and
  for each $i$, let $P_i$ denote the set of realizations of $p_i$ in
  $\cal U$. Then for every $n_1,\ldots ,  n_k$ the topology on 
${P_1}^{n_1} \times  \ldots \times {P_k}^{n_k}$ is Noetherian of
  finite dimension. In particular every closed
  subset is relatively definable.
\end{Lemma}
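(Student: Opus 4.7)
The plan is to extend Hrushovski's Noetherianity result for $Q^k$ (the case of a single thin minimal type $Q$, cited above as the $k=1$ base case) to a product of distinct thin minimal types. First I would observe that thinness is additive across factors: if each $p_i$ has thinness bound $d_i$, then any tuple $\bar a$ in $P := P_1^{n_1} \times \ldots \times P_k^{n_k}$ satisfies $\mathrm{tr.deg}(K(\bar a_\infty)/K) \le D := \sum_i n_i d_i$, so tuples from the product behave thinly, ``as if'' they realized a single thin type.

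Next I would examine Hrushovski's proof of Noetherianity for $Q^k$ and argue that it uses only (a) thinness of $Q$, which bounds the transcendence rank of the associated prime separable ideals in $K[X_\infty]$, and (b) the geometric structure of the $\lambda$-topology via the $\Lambda_m$ functors, which commute with products. Both ingredients transfer to the product setting: irreducible $\lambda$-closed subsets of $P$ correspond bijectively to prime separable ideals of $K[X_\infty]$ (now with $X$ running over coordinates of the ambient variety $V_1^{n_1} \times \ldots \times V_k^{n_k}$) containing $I^0(X)$ and of transcendence rank at most $D$. Hrushovski's ideal-theoretic argument should then yield Noetherianity of the $\lambda$-topology on $P$ with essentially no modification.

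Once Noetherianity is established, the finite-dimension claim follows from the argument given in the paragraph preceding the lemma: a strictly decreasing chain of irreducible closed subsets corresponds to strictly increasing prime separable ideals with strictly decreasing transcendence rank, bounded by $D$, so the chain length is at most $D+1$. The ``in particular'' clause is then immediate: by Noetherianity together with the fact noted earlier that finite unions (and easily also finite intersections) of basic closed sets are basic, every closed subset of $P$ is already a single basic closed set $\lambda_m^{-1}(W) \cap P$ for some $m$ and some algebraic subvariety $W$, which is relatively definable by construction.

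The hard part will be verifying that Hrushovski's argument in the single-type case genuinely depends only on thinness and the ambient geometric structure of the $\lambda$-topology, rather than on features specific to the realizations of a single minimal type. In particular, non-orthogonality between distinct $p_i, p_j$ may introduce cross algebraic relations among the $\lambda$-components of coordinates from $P_i$ and $P_j$, producing closed subsets of $P$ that do not decompose as products of closed subsets of the factors; however, since the framework tracks all polynomial relations among the combined $\lambda$-components via prime separable ideals in the full polynomial ring $K[X_\infty]$, such cross relations should be absorbed into the same ideal-theoretic analysis without requiring essential modifications.
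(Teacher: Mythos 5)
Your proposal conjectures rather than proves the key step, and the paper's own argument goes a quite different route precisely because the transfer you want is not automatic. Concretely: you propose to examine Hrushovski's proof for $Q^k$ (a power of a \emph{single} thin minimal type), argue that it uses only thinness and the geometry of the $\lambda$-topology, and conclude that it applies ``with essentially no modification'' to mixed products $P_1^{n_1}\times\ldots\times P_k^{n_k}$. You yourself flag this as ``the hard part'' and leave it at the level of a hope that cross relations ``should be absorbed'' into the same ideal-theoretic analysis. That is exactly the content of the lemma; asserting that the earlier argument transfers is not a proof of it. Hrushovski's cited statement is specifically for powers of one minimal thin type, and minimality of the single type (not merely thinness of the product tuple) is what is available in that argument, so you cannot take the transfer for granted.

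The paper avoids re-running Hrushovski's argument and instead reduces the mixed product to the single-type case via two claims, one for each regime of interaction between the $p_i$. For pairwise non-orthogonal types (Claim~\ref{notorthogonal}), one finds a single minimal thin type $r$ (the type of a $U$-rank-one tuple $(a_1,\ldots,a_n)$ with $a_i\models q_i$, which exists by non-orthogonality and saturation of $K$) together with a \emph{continuous surjection} $R^n\twoheadrightarrow Q_1\times\ldots\times Q_n$ built from the coordinate projections; Noetherianity then pulls back from $R^n$, which is Hrushovski's case. For pairwise orthogonal types (Claim~\ref{orthogonal}), the key fact is independence: any tuple from $Q_1$ is independent over $K$ from $Q_2\cup\ldots\cup Q_m$, so a closed set defined over $K\cup\{b\}$ with $b$ from $Q_1$ is already $K$-definable, and by a compactness argument every closed set decomposes as a finite union of boxes $W_1\times\ldots\times W_m$. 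Your proposal addresses neither of these mechanisms: the non-orthogonal case needs the auxiliary type $r$ and the continuous map, and the orthogonal case needs the independence/box-decomposition argument; ``the ideal-theoretic framework tracks everything'' glosses over precisely the distinction the paper is forced to make. Your observations that the product is thin, that chain lengths are bounded by transcendence degree (giving finite dimension), and that Noetherianity plus closure of basic closed sets under finite unions and intersections gives relative definability of all closed sets, are all fine and match the paper; but the central Noetherianity claim is left as an unverified conjecture.
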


We proceed in two steps.  

\begin{Claim}\label{notorthogonal} Suppose $q_1,\ldots q_n$ are pairwise non  orthogonal minimal
  thin types (not necessarily distinct) and $Q_j$ denotes the set of realizations of $q_j$ in
  $\cal U$. Then there is a minimal thin type over $K$, $r$, such that, if
  $R$ denotes the set of realizations of $r$ in  $\cal U$, then there
  is a continuous map $f$ from $R^{n}$ onto ${Q_1}\times 
  \ldots \times {Q_n}$. It follows
  that the topology on ${Q_1}\times 
  \ldots \times {Q_n}$ is Noetherian. 
\end{Claim}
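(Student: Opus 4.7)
My plan is to find the minimal thin type $r$ as the complete type over $K$ of a suitable tuple $(a_1,\dots,a_n)$ of mutually interalgebraic realizations of the given types, and then obtain the surjection onto $\prod_i Q_i$ from coordinate projections.

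Using the saturation of $K$ (permitted by the remark following Theorem \ref{QEthm}) and pairwise non-orthogonality, I would fix $a_1 \models q_1$ and, inductively for $i \ge 2$, choose $a_i \models q_i$ such that $a_i \in acl(Ka_1)$. This is the content of non-orthogonality of minimal types once $K$ is saturated enough to realize the witness parameters already over $K$: non-orthogonality of $q_1$ and $q_i$ becomes non-weak-orthogonality over $K$, and by minimality a common realization of a nonforking product lies in $acl(Ka_1) \cap Q_i$. Set $r = tp(a_1,\dots,a_n / K)$ and let $R$ be the set of realizations of $r$ in $\cal U$. Then $r$ has $U$-rank $1$, since $a_2,\dots,a_n \in acl(Ka_1)$ and $U(tp(a_1/K)) = 1$, and $r$ is thin, since the $\lambda$-closure of $Ka_1\cdots a_n$ is field-algebraic over that of $Ka_1$, which has finite transcendence degree over $K$ by thinness of $q_1$. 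So $r$ is a minimal thin type over $K$.

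For each $i$ the coordinate projection $\pi_i: R \to Q_i$, $(b_1,\dots,b_n) \mapsto b_i$, is continuous, because coordinate projections of algebraic varieties are continuous in the $\lambda$-topology, and surjective, because the $Aut({\cal U}/K)$-orbit of $(a_1,\dots,a_n)$ is precisely $R$ while its image under the $i$-th projection is precisely $Q_i$. I would then define
\[
f: R^{n} \longrightarrow Q_1 \times \cdots \times Q_n, \qquad f(b^{(1)},\dots,b^{(n)}) := (\pi_1(b^{(1)}), \dots, \pi_n(b^{(n)})),
\]
which is continuous and surjective by construction.

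For the Noetherianity conclusion, the result of Hrushovski recalled just before the claim gives that $R^n$ is Noetherian, being a finite Cartesian power of the realization set of a thin minimal type. A continuous surjection from a Noetherian space yields a Noetherian space: any strictly descending chain of closed sets in $\prod_i Q_i$ pulls back along $f$ to a strictly descending chain of closed sets in $R^n$ (strictness preserved by surjectivity), which must therefore stabilize. The main obstacle to check carefully is the first step, namely that pairwise non-orthogonality of minimal types together with saturation of $K$ really yields the witnesses $a_i \in acl(Ka_1)$ already over $K$, so that $r$ is a single complete type over $K$ with the claimed $U$-rank and thinness; once this is secured, the topological part of the argument is purely formal.
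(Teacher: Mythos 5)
Your proof is correct and takes essentially the same route as the paper's: the paper simply asserts, via non-orthogonality plus saturation of $K$, the existence of a tuple $(a_1,\ldots,a_n)\models q_1\times\cdots\times q_n$ with $U$-rank one, while you unwind that assertion explicitly by choosing each $a_i\in\mathrm{acl}(Ka_1)$. The definition of $f$ via the coordinate projections $\pi_i$, its continuity and surjectivity, and the transfer of Noetherianity from $R^n$ (Hrushovski) along a continuous surjection all match the paper's argument.
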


\begin{proof} 
Once we have shown that such a type $r$ and the required continuous maps
exist, the Noetherianity  follows directly
from that of Cartesian products of $R$, which was shown in \cite{Hrushovski}. 
 
\noindent Let $(a_1,a_2,\ldots ,a_n)$, realizing $q_1\times q_2\times \ldots \times
q_n$, be such that $r := tp(a_1,a_2,\ldots, a_n)$ has U-rank equal to
one. This exists by the assumption of non pairwise orthogonality and the
saturation assumption on $K$.
So the type $r$ is minimal and is easily seen to be also thin (the
transcendence degree of the field  generated by the $a_i$ and their
images by the $\lambda$-functions will be finite as each $q_i$ has this
property). 

For each $i$, the ${i}$-th component map, $\pi_i$  from $R$ to $Q_i$ is surjective, as
all elements of $Q_i $ realize the same type over $K$. 

Now consider the Cartesian  product $R ^n$. We claim that there is a
surjective definable map $f$ from $R^n$ onto $Q_1 \times \ldots \times Q_n$ and
that it is continuous.
Let $(b_1,\ldots b_n) \in R^n$, and let $f(b_1,\ldots ,b_n) := (\pi_1
(b_1),\ldots , \pi_n(b_n)) \in Q_1\times\ldots \times Q_n$. 
 Let $(a_1, \ldots , a_n)$ be any tuple from $Q_1
\times \ldots \times  Q_n$. For each $i$ there is some element $b_i$ from $R$ such
that $\pi_i (b_i) = a_i$, so $f$ is surjective.

Now $f$  is the restriction to $R^n$  of a projection map $(Q_1\times \ldots \times Q_n)^n \to Q_1 \times \ldots \times Q_n$, so it is continuous.
\end{proof}

Note that as we have not supposed that the non-orthogonal types in
Claim \ref{notorthogonal} were distinct, this gives the Noetherianity for
any Cartesian  product of finitely many non pairwise orthogonal minimal
thin types.

\begin{Claim} \label{orthogonal} Suppose $q_1,\ldots q_m$ are pairwise orthogonal minimal
  thin types, and for each $i$, $1\leq i \leq m$, $\{p_{(i,1)},\ldots
  ,p_{(i,n_i)}\}$ is a set 
of minimal (thin) types non orthogonal to $q_i$.
  Let $P_k$ denote the set of realizations of $p_k$ in $\cal U$, and let   
  ${Q_i}$ denote the cartesian product ${P_{(i,1)}}
      \times \ldots \times {P_{(i, n_i)}}$.   Then the topology on   ${Q_1}\times 
  \ldots \times {Q_m}$ is Noetherian, and every closed set $C$ is
  a finite union of closed sets of the form $W_1 \times \ldots \times
  W_m$, where $W_i$ is a closed subset of ${Q_i}$. 
\end{Claim}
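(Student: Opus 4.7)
The plan is to prove the claim by induction on $m$, building on the Noetherianity from Claim \ref{notorthogonal} and using orthogonality to force a rectangle decomposition. The base case $m=1$ follows directly from Claim \ref{notorthogonal}: since the types $p_{(1,1)},\ldots,p_{(1,n_1)}$ are all non-orthogonal to $q_1$, they are pairwise non-orthogonal by transitivity of non-orthogonality among minimal types, so $Q_1$ has a Noetherian topology, and the rectangle decomposition is trivial.

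For the inductive step, the first key observation is that orthogonality across groups propagates: by transitivity of (non-)orthogonality for minimal types, $q_i \perp q_j$ for $i \neq j$ forces $p_{(i,k)} \perp p_{(j,l)}$ for all $k,l$. Hence any $\bar a_i \in Q_i$ is independent over $K$ from any $\bar a_j \in Q_j$ when $i \neq j$, and using the saturation assumption on $K$ (which supplies stationarity of the relevant types), the complete type $\mathrm{tp}(\bar a_1,\ldots,\bar a_m/K)$ is determined by the tuple of marginal types $(\mathrm{tp}(\bar a_i/K))_i$. Since any closed $C$ is $K$-invariant (as an intersection of $K$-definable basic closed sets), membership in $C$ depends only on this tuple of marginals. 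Consequently, for each $\bar a_1 \in Q_1$ the fiber
\[
F(\bar a_1) := \{(\bar a_2,\ldots,\bar a_m) : (\bar a_1,\ldots,\bar a_m) \in C\}
\]
is a closed subset of $Q_2 \times \cdots \times Q_m$ that depends only on $\mathrm{tp}(\bar a_1/K)$, and by the inductive hypothesis is a finite union of rectangles. Dually, for any closed $E \subseteq Q_2\times\cdots\times Q_m$ the locus $Z_E := \{\bar a_1 : F(\bar a_1) \supseteq E\} = \bigcap_{\bar y \in E}\{\bar a_1 : (\bar a_1,\bar y)\in C\}$ is closed in $Q_1$ (intersection of closed sets), producing an antitone Galois connection between closed subsets on each side.

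The heart of the argument, and the principal obstacle, is to extract finitely many fiber values $E_1,\ldots,E_s$ so that $C = \bigcup_{j=1}^s Z_{E_j}\times E_j$. One approach is to observe that any specialization $\bar a'_1 \in \overline{\{\bar a_1\}}$ yields $F(\bar a_1) \subseteq F(\bar a'_1)$, since the single-parameter fiber $\{\bar x : (\bar x,\bar y) \in C\}$ is closed and hence contains the closure of $\{\bar a_1\}$ whenever it contains $\bar a_1$. Along any specialization chain in $Q_1$ the fibers form an ascending chain in $Q_2\times\cdots\times Q_m$; both chains stabilize by Noetherianity, and combining this with the finitely many irreducible components of $\overline{\pi_{Q_1}(C)}$ provided by the Noetherianity of $Q_1$, only finitely many ``generic fibers'' $E_1,\ldots,E_s$ can arise. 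Assembling, $C = \bigcup_{j=1}^s Z_{E_j}\times E_j$, and expanding each $E_j$ via the inductive rectangle decomposition yields the full rectangle decomposition of $C$. Noetherianity of $Q_1\times\cdots\times Q_m$ then follows: a strict descending chain of closed sets would, via the rectangle decompositions, translate into strict descent in some factor $Q_i$, contradicting the Noetherianity established there. I expect the cleanest formulation of ``finitely many generic fibers arise,'' requiring a careful interplay of Noetherianity on both sides of the Galois connection with the type-invariance of fibers, to be the main technical step.
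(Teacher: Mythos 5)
Your overall strategy --- build, for each point of $C$, a product rectangle $W\times Z\subseteq C$ containing it, then extract a finite subcover --- is the same as the paper's. The observation that the fiber $Z:=F(\bar a_1)$ is defined over $K$ (not merely over $K\cup\bar a_1$), because orthogonality forces $\bar a_1$ to be independent from $Q_2\times\cdots\times Q_m$ over $K$ and hence the fiber is $K$-invariant, is exactly the paper's ``first observation''; your marginal-type reformulation is correct. Your set $Z_E=\{\bar a_1: F(\bar a_1)\supseteq E\}$ is likewise the paper's $W=\{x:\ \text{for all } y\in Z,\ (x,y)\in C\}$.

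Where you diverge --- and where there is a genuine gap --- is in the finiteness step. The paper finishes by model-theoretic compactness: by the inductive hypothesis each $W$ and each $Z$ is a closed set in a Noetherian topology, hence relatively definable; the rectangles $W\times Z$ are therefore relatively definable and defined over $K$; they cover $C$; and since $C$ is type-definable over $K$, compactness extracts a finite subcover. Your replacement --- counting ``generic fibers'' via specialization chains and the irreducible components of $\overline{\pi_{Q_1}(C)}$ --- does not deliver the decomposition. The finitely many irreducible components give one generic fiber apiece, and fibers only grow under specialization, but nothing in this Noetherian bookkeeping bounds the total number of distinct fiber values that occur: over a single irreducible component of $\overline{\pi_{Q_1}(C)}$ there can be many towers of strictly larger fibers at successively more special points, and chain-stabilization controls one chain at a time, not their number. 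The finiteness that actually holds here is a consequence of type-definability of $C$ together with compactness, not of Noetherianity of the factors by itself. You flag this as ``the main technical step'' but leave it open; that step is precisely where one should invoke compactness, as the paper does, rather than attempt a purely topological count.
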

\begin{proof}

First we note that by pairwise orthogonality and minimality of the
types $q_j$ and $p_k$, if $\bar a_j$ is 
a tuple of elements from
${Q_j}$, then $\{\bar
a_1,\ldots, \bar a_m\}$, is an independent set of tuples over $K$: indeed by
minimality, each $\bar a_j$ is contained in the algebraic closure
(over $K$) of a
$K$-independent subtuple $B_j$,   each element in $B_j$ realizing one of
the types  $\{p_{(j,1)},\ldots
  ,p_{(j, n_j)}\}$, which are all non-orthogonal to $q_j$.  By orthogonality
of the $q_j$, 
the set   $B_1 \cup \ldots \cup B_m$ form an independent set over $K$ and the
rest follows.  

A first observation: Let   $Z= Z_b \subseteq  {Q_2}\times  \ldots
\times {Q_m}$ be  a closed
set (hence infinitely definable) defined over $K \cup b $ where
$b$ is a tuple of elements in $Q_1$.  Then $Z$ is in fact
defined over $K$. Indeed $Z$ is $K$-invariant: let $b'$ realize the
same  type as $b$ over $K$. By the remark above $b$ and $b'$ must be
independent from  $Q_2 \cup \ldots \cup Q_m$ over $K$, it follows that $b$ and
$b'$ also realize the same type over $K \cup Q_2 \cup \ldots \cup
Q_m$, hence $Z_b = Z_{b'}$. 

Now we show by induction on $m\geq 1$ that   the topology on
${Q_1}  \times \ldots \times {Q_m}$ is Noetherian, and that every closed set is of the required form. 

The case $m =1$, that is, Cartesian  products of a finite set of pairwise non-orthogonal
 thin minimal types, is Claim \ref{notorthogonal}, which says that the
 topology is Noetherian.  

Now, for $m>1$,  let $C\subseteq {Q_1}\times   \ldots \times {Q_m}$ be any
closed subset, and let $a:= (a_1, a_2, \ldots , a_m) \in C$. We show
that 

(*) $a$ is contained in a set  of the form $W \times Z \subseteq C$ 
where $W$  is a closed subset of ${Q_1}$ defined over $K$ and $Z$
is a closed subset of ${Q_2} \times \ldots \times {Q_m}$, also
defined over $K$. 

The Noetherianity and form of the closed sets will follow: by the induction assumption, $W$ is a
relatively  definable closed subset in ${Q_1}$ and $Z$ is a relatively
definable closed subset in ${Q_2} \times \ldots \times {Q_m}$ of
the right form. By (*),
$C$ which is infinitely definable (over  $K$) is  covered by a union
of such relatively definable sets each of the form $W \times Z$ each defined
over $K$. By (model theoretic) compactness, it follows that $C$ is a
finite union of such sets. And as ${Q_1}$ and ${Q_2}\times 
\ldots \times {Q_m}$ are both Noetherian, the
result follows. 

It remains only to check condition (*). 

Let $Z := Z_{a_1} = \{ (a_2,\ldots ,a_m) \in 
{Q_2}\times \ldots \times {Q_m} : (a_1, a_2, \ldots ,
a_m) \in C\} $. The set $Z$ is closed and defined over $K \cup
a_1$. By the first observation made above,  $Z$ is in fact defined over $K$. 
Now let $W = \{ x \in {Q_1} : \mbox{ for all } y \in Z, (x,y)\in
C\}$.
Then $W$ is closed (it is the intersection of the $C_y$, for $y\in Z$)
and clearly defined over $K$. 

And of course $a \in W \times Z$ and $W \times Z \subseteq C$.

\end{proof}

We can now conclude the proof of  Lemma \ref{cartesian}: 
\noindent For each non orthogonality class represented amongst the $p_i$, choose a
representative $q$ and apply Claim \ref{orthogonal}.\qed

\medskip 

\noindent
{\em We now go back to the proof of Proposition \ref{Noetherian}}: 
Let $A$ be any abelian variety over $K$, $A=\Sigma_{1\leq i\leq n}
A_i$, where each $A_i$ is a simple abelian variety over $K$. It follows
that $A^\sharp = \Sigma_{0\leq i\leq n} {A_i}^\sharp$. For each $i$
there is a minimal thin type $q_i$ such that $ {A_i}^\sharp = Q_i+
\ldots + Q_i$   ($m_i$ times). So $A^\sharp = Q_1+\ldots +Q_1 +Q_2+\ldots
Q_2 + \ldots + Q_n+ \ldots + Q_n$. 

So we have a continuous relatively definable surjective map, for every
$d$  from the
Cartesian product $({Q_1}^{m_1} \times \ldots \times {Q_n}^{m_n}) ^d $ onto
${A^\sharp}^d$.  And Noetherianity now
follows from Lemma \ref{cartesian}, and this concludes the proof of
Proposition \ref{Noetherian}. \qed

\medskip

\noindent Note, as a corollary, that each closed set of $(A^\sharp)^n$ is actually relatively definable.

\medskip

If $\mathcal{U}_0 \prec \cal U$, and $a\in \cal U$,
$\mathcal{U}_0 [a_\infty]$ denotes  the ring generated by $a$ and its images by the
$\lambda$-functions over $\cal U_0$
and 
$\mathcal{U}_0(a_\infty)$, the fraction field of $\mathcal{U}_0 [a_\infty]$.

\begin{Lemma}\label{basictopology} Let $X$ and $Y$ be closed sets in
  $(A^\sharp)^d$, with $X$ irreducible, and $pr:X\times Y \to X$ the
  projection. Let $G\subsetneq F$ be closed subsets of $X\times Y$, such that $\overline{pr(F)}=X$ and $F$ is irreducible. Let $a$ be a topological generic of $X$ over some small model $\mathcal{U}_0$ of definition for $F$ and $G$, and we denote $F(a)=\{y\in Y \mid (a,y) \in F\}$, and similarly for $G$.
Then $G(a)\subsetneq F(a)$. Moreover $F(a)$ is irreducible as a closed set over $\mathcal{U}_0(a_\infty)$ (note however that it may be reducible as a closed set over $\cal U$).
\end{Lemma}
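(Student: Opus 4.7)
The strategy is to lift $a$ to a topological generic $(a,b)$ of $F$ over $\mathcal{U}_{0}$, meaning $(a,b)\in F$ and $(a,b)$ avoids every proper closed subset of $F$ definable over $\mathcal{U}_{0}$. Both conclusions then follow directly.

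\emph{Lifting.} The partial type defining the topological generic of $F$ over $\mathcal{U}_{0}$ is consistent by Noetherianity of the $\lambda$-topology (Proposition~\ref{Noetherian}) and irreducibility of $F$, so it is realized in the saturated $\mathcal{U}$ by some $(a',b')\in F$. Its projection $a'=pr(a',b')$ is a topological generic of $X$ over $\mathcal{U}_{0}$: for any proper closed $Y\subsetneq X$ over $\mathcal{U}_{0}$, the set $pr^{-1}(Y)\cap F$ is a proper closed subset of $F$ (using $\overline{pr(F)}=X$), so $(a',b')\notin pr^{-1}(Y)$, giving $a'\notin Y$. Now topological generics of $X$ correspond to the single prime separable ideal $I(X)$, hence $a$ and $a'$ realize the same complete type over $\mathcal{U}_{0}$ in the ambient $SCF_{p,1}$ (which has quantifier elimination in the $\lambda$-function language). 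By homogeneity of $\mathcal{U}$, an automorphism fixing $\mathcal{U}_{0}$ sends $a'$ to $a$; applying it to $b'$ produces the required $b$.

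\emph{Deducing both parts.} Since $G\subsetneq F$ is proper closed, $(a,b)\notin G$ gives $b\in F(a)\setminus G(a)$, proving (1). For (2), suppose $F(a)=C_{1}\cup C_{2}$ with each $C_{i}$ a proper closed subset of $F(a)$ over $\mathcal{U}_{0}(a_{\infty})$. A standard parameter-to-variable replacement---rewriting the defining $\lambda$-polynomial equations of $C_{i}$ with coefficients in $\mathcal{U}_{0}(a_{\infty})$ as $\mathcal{U}_{0}$-polynomials in both $a$ and $y$---exhibits each $C_{i}$ as the fiber $\widetilde{C}_{i}(a)$ of some closed $\widetilde{C}_{i}\subseteq X\times Y$ over $\mathcal{U}_{0}$. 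Setting $F^{(i)}:=F\cap\widetilde{C}_{i}$, this is closed in $F$ with $F^{(i)}(a)=C_{i}\subsetneq F(a)$, forcing $F^{(i)}\subsetneq F$ proper. Irreducibility of $F$ then gives $F^{(1)}\cup F^{(2)}\subsetneq F$, so the topological generic $(a,b)$ avoids it: $b\notin C_{1}\cup C_{2}=F(a)$, contradicting $b\in F(a)$.

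\emph{Main obstacle.} The delicate step is the lifting, in particular identifying $a$ and $a'$ as $SCF_{p,1}$-conjugate over $\mathcal{U}_{0}$. This rests on quantifier elimination for $SCF_{p,1}$ in the $\lambda$-function language (which is known); the quantifier elimination we are trying to prove here is for the induced structure on $A^{\sharp}$, which is a different statement, so the appeal to ambient QE is not circular.
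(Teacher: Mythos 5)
Your proof is correct, and it takes a genuinely different route from the paper's. The paper argues purely algebraically: it identifies $\mathcal{U}_0[X]_\infty$ with $\mathcal{U}_0[a_\infty]$, notes that $\overline{pr(F)}=X$ forces $I(F)\cap\mathcal{U}_0[a_\infty]=0$, and then observes that passing from $\mathcal{U}_0[a_\infty][Y]_\infty$ to $\mathcal{U}_0(a_\infty)[Y]_\infty$ is a localization at a multiplicative set disjoint from the prime ideal $I(F)$ -- so strict containments of separable ideals are preserved and $I(F(a))$ stays prime. Your argument instead lifts $a$ to a topological generic $(a,b)$ of $F$ over $\mathcal{U}_0$ (via a realization of the generic type in the saturated model followed by an automorphism over $\mathcal{U}_0$), and then reads both conclusions off genericity: $(a,b)\notin G$ gives $G(a)\subsetneq F(a)$, and a hypothetical decomposition $F(a)=C_1\cup C_2$ over $\mathcal{U}_0(a_\infty)$, after parameter-to-variable replacement and intersection with $F$, would produce a proper $\mathcal{U}_0$-closed subset of $F$ containing $(a,b)$ -- impossible. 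Both proofs ultimately rest on the same input, namely the bijection between complete types over a model and prime separable ideals (i.e., quantifier elimination in $SCF_{p,1}$, as you correctly note, not the QE for $A^\sharp$ being proven). What differs is the emphasis: the paper's localization argument is self-contained and does not invoke Noetherianity of the $\lambda$-topology, whereas your lifting step leans on Proposition~\ref{Noetherian} both to guarantee that the ``complement of a proper closed set'' is first-order (so the generic partial type is a genuine set of formulas to be realized by saturation) and, implicitly, to have the decomposition into irreducibles behave as expected. Since Proposition~\ref{Noetherian} has already been established at this point in the paper, this is a legitimate dependency, and the geometric picture your proof gives is arguably more transparent; the paper's version has the small advantage of being a purely local computation with ideals.
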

\begin{proof}
We denote by $\mathcal{U}_0[X]_\infty:=\mathcal{U}_0[T_\infty]/I(X)$ the ring of $\lambda$-coordinates of $X$ over $\mathcal{U}_0$, and $\mathcal{U}_0[Y]_\infty$ in a similar way. By irreducibility, $\mathcal{U}_0[X]_\infty$ is an integral domain, and by choice of $a$, $\mathcal{U}_0[X]_\infty \simeq \mathcal{U}_0[a_\infty]$.\\
We denote by $I(F)$ and $I(G)$ the separable ideals in $\mathcal{U}_0[a_\infty][Y]_\infty \simeq \mathcal{U}_0[X\times Y]_\infty$ corresponding to $F$ and $G$. Since $\overline{pr(F)}=X$, $I(F)\cap \mathcal{U}_0[a_\infty]=0$. Now $I(F(a))$ is the ideal generated by $I(F)$ in $\mathcal{U}_0(a_\infty)[Y]_\infty$, and similarly for $I(G(a))$. We claim that $I(F)\subsetneq
I(G)$ implies that $I(F(a)) \subsetneq I(G(a))$. 
If $I(F(a))=I(G(a))$, then for every $P\in I(G)$, there is some non zero
$d\in {\cal U}_0[a_\infty]$ such that $dP\in I(F)$, which implies that
$P\in I(F)$ since $d\not \in I(F)$, which is prime. That contradicts
$I(F)\neq I(G)$. We get that $I(F(a))$ is prime by the same kind of
argument. This  means that $F(a)$ is irreducible as a closed set over $\mathcal{U}_0(a_\infty)$. 
\end{proof}

We now prove that $A^\sharp$ is complete in the category of $\lambda$-closed varieties, in the following sense:

\begin{Proposition} \label{closed}
Let $F$ be a definable $\lambda$-closed subset of $(X\times A)({\cal U})$, where $X$ is an algebraic variety over $K$, and $pr:X\times A \to X$ the projection. Then $pr((X({\cal U})\times A^\sharp) \cap F)$ is $\lambda$-closed.
\end{Proposition}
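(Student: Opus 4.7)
The plan is to interpret Proposition \ref{closed} as a completeness statement for $A^\sharp$ in the $\lambda$-topology, analogous to the completeness of abelian varieties in the Zariski topology. The key algebraic-geometric input is that each algebraic subgroup $A_n \subseteq \Lambda_n A$ is isogenous to the abelian variety $A$, hence itself complete; the identity $p^n A({\cal U}) = \lambda_n^{-1}(A_n)$ is what lets us reduce projecting through $A^\sharp$ to projecting through the complete $A_n$'s.

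First, I would present $F$ as a decreasing intersection $F = \bigcap_{n \geq 0} F^{(n)}$ of basic closed sets $F^{(n)} = \lambda_n^{-1}(V^{(n)})$, where $V^{(n)}$ is an algebraic subvariety of $\Lambda_n(X \times A) = \Lambda_n X \times \Lambda_n A$. Such a reorganization is obtained by amalgamating all basic closed sets of level $\leq n$ into a single one of level $n$ using $\lambda_m^{-1}(V_m) = \lambda_n^{-1}(\Lambda_{n-m} V_m)$. Using $p^n A({\cal U}) = \lambda_n^{-1}(A_n)$ one gets
\[F^{(n)} \cap \bigl(X({\cal U}) \times p^n A({\cal U})\bigr) = \lambda_n^{-1}(W^{(n)}), \quad W^{(n)} := V^{(n)} \cap (\Lambda_n X \times A_n).\]
Since $A_n$ is complete, the algebraic projection $q_n : \Lambda_n X \times A_n \to \Lambda_n X$ is a closed morphism, so $q_n(W^{(n)})$ is Zariski-closed in $\Lambda_n X$. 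Combined with the fact that $\lambda_n$ and $\rho_n$ are reciprocal bijections $V({\cal U}) \leftrightarrow \Lambda_n V({\cal U})$ for every variety $V$, a direct check yields
\[pr\bigl(F^{(n)} \cap (X({\cal U}) \times p^n A({\cal U}))\bigr) = \lambda_n^{-1}\bigl(q_n(W^{(n)})\bigr),\]
a basic $\lambda$-closed subset of $X({\cal U})$.

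Finally, I would establish the identity
\[pr\bigl(F \cap (X({\cal U}) \times A^\sharp)\bigr) = \bigcap_{n \geq 0} pr\bigl(F^{(n)} \cap (X({\cal U}) \times p^n A({\cal U}))\bigr),\]
whose right-hand side is an intersection of basic $\lambda$-closed sets and hence $\lambda$-closed. Inclusion $\subseteq$ is immediate. For the reverse inclusion, given $x$ in the right-hand side, for each $n$ one finds some $a_n \in p^n A({\cal U})$ with $(x, a_n) \in F^{(n)}$; since both chains $(F^{(n)})_n$ and $(p^n A)_n$ are decreasing, the partial type
\[\bigl\{(x, y) \in F^{(n)}\bigr\}_{n \geq 0} \cup \bigl\{y \in p^n A\bigr\}_{n \geq 0}\]
is finitely satisfiable, and saturation of ${\cal U}$ produces a realization $a \in A^\sharp$ with $(x, a) \in F$.

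The main conceptual obstacle is that one cannot project through $\Lambda_n A$ itself, which is generally not complete; the whole reason the argument works is the existence of the complete algebraic subgroup $A_n$ identified by $p^n A({\cal U}) = \lambda_n^{-1}(A_n)$. Once algebraic-geometric completeness is in hand at each level, the passage from the levels to $A^\sharp$ is a routine model-theoretic compactness argument.
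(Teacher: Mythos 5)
Your plan follows the paper's overall structure (level-by-level reduction to the complete group scheme $A_n$, then a compactness argument to pass to the intersection), but the key identity
\[pr\bigl(F^{(n)} \cap (X({\cal U}) \times p^n A({\cal U}))\bigr) = \lambda_n^{-1}\bigl(q_n(W^{(n)})\bigr)\]
fails in the inclusion $\supseteq$, and this is a genuine gap. If $\lambda_n(x)$ is a ${\cal U}$-rational point of the Zariski-closed image $q_n(W^{(n)})$, you only know that the fibre $q_n^{-1}(\lambda_n(x))\cap W^{(n)}$ is a nonempty closed subscheme of $A_n$ over ${\cal U}$; since ${\cal U}$ is separably closed but not algebraically closed, that fibre may well have no ${\cal U}$-rational point at all (compare the proper morphism $w\mapsto w^p$ on $\mathbb{A}^1$, whose image on ${\cal U}$-points consists only of $p$-th powers). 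Completeness of $A_n$ gives you that $q_n$ is a closed morphism, hence that the image \emph{variety} is closed, but it does not give you rational points in fibres. So $\lambda_n^{-1}(q_n(W^{(n)}))$ can be strictly larger than $pr(F^{(n)}\cap(X({\cal U})\times p^n A({\cal U})))$, and your final intersection is then only an upper bound, not an equality.

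This is exactly the obstruction the paper's Corollary \ref{image} (and the lemma preceding it) is designed to overcome: for a $K$-morphism $g:V\to W$ there is some finite level $N$ such that $b\in g(V({\cal U}))$ iff $\lambda_N(b)\in(\Lambda_N g)(\Lambda_N V)({\cal U})$, i.e.\ the image of ${\cal U}$-points becomes a $\lambda$-closed condition only after passing to a higher $\Lambda$-level. The paper therefore introduces an auxiliary level $l=l_n$, invokes completeness again at level $l+n$ (via $A_{l+n}$), and then sandwiches the target set between the closed conditions $(**)_n$ and the conditions $(*)_n$ — showing both that $x\in pr((X\times A^\sharp)\cap F)$ forces $(**)_n$ (because $z_n$ can be taken $p^\infty$-divisible) and that $(**)_n$ implies $(*)_n$. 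That sandwiching, and the need to climb $\lambda$-levels, is the core content missing from your argument; without it the proof does not go through.
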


We first need the following lemmas.

\begin{Lemma} Let $V,W$ be two   affine varieties defined
  over $K$. Let $g$ be a morphism  from $V$ to $W$ defined
  over $K$.\\
 
Let $b \in W({\cal U})$. Then $b \in g(V({\cal U}))$ if and only if, for every $n$, 
 $ \lambda_n (b) \in (\Lambda_n g) ( \Lambda_n V)$. 
 \end{Lemma}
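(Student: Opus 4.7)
My plan is to handle the forward direction by a direct functorial computation, and the converse by a compactness argument exploiting saturation of $\mathcal{U}$.

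For the forward direction, if $a \in V(\mathcal{U})$ satisfies $g(a) = b$, the naturality relation $\lambda_n \circ g = (\Lambda_n g) \circ \lambda_n$ will give $\lambda_n(b) = (\Lambda_n g)(\lambda_n(a))$, with $\lambda_n(a)$ a $\mathcal{U}$-point of $\Lambda_n V$; hence $\lambda_n(b) \in (\Lambda_n g)(\Lambda_n V)$ for every $n$, as required.

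For the converse, for each $n \geq 0$ I will consider the $\lambda$-closed subset of $V(\mathcal{U})$
\[
F_n := \lambda_n^{-1}\bigl((\Lambda_n g)^{-1}(\lambda_n(b))\bigr) = \{a \in V(\mathcal{U}) : \lambda_n(g(a)) = \lambda_n(b)\}.
\]
The $F_n$ are decreasing in $n$ (since $\lambda_{n+1}$ refines $\lambda_n$), and $\bigcap_n F_n = \{a \in V(\mathcal{U}) : g(a) = b\}$, using that $\lambda_n$ is injective on $W(\mathcal{U})$ (as $\rho_n\circ\lambda_n = \mathrm{id}$). By saturation of $\mathcal{U}$, the partial type $\{x \in F_n : n \geq 0\}$ over $K\cup\{b\}$ will be realized as soon as every $F_n$ is nonempty, and any realization is a sought preimage $a\in V(\mathcal{U})$. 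The hypothesis supplies, at each level $n$, a point $y_n \in \Lambda_n V(\mathcal{U})$ with $(\Lambda_n g)(y_n) = \lambda_n(b)$; setting $a_n := \rho_n(y_n) \in V(\mathcal{U})$ one verifies $\lambda_n(g(a_n)) = (\Lambda_n g)(\lambda_n(a_n)) = (\Lambda_n g)(y_n) = \lambda_n(b)$, so $a_n \in F_n$.

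The step I expect to require the most care is the extraction of a genuine $\mathcal{U}$-preimage $y_n \in \Lambda_n V(\mathcal{U})$ from the image-containment statement $\lambda_n(b) \in (\Lambda_n g)(\Lambda_n V)$. This is where the specific structure of the $\Lambda_n$ functor is essential: it encodes the $p$-basis $\{1, t, \dots, t^{p-1}\}$ into the coordinates of $\Lambda_n V$ and $\Lambda_n W$, and the resulting surjectivity of $(Y_0,\dots,Y_{p-1})\mapsto \sum_j Y_j^p t^j$ on $\mathcal{U}$-points ensures that any $\mathcal{U}$-point of the image does come from a $\mathcal{U}$-point of $\Lambda_n V$ --- precisely the feature that would fail for the original $g$ in the presence of $p$-th power obstructions but is repaired level-by-level in the $\lambda$-tower. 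Granted this, the compactness argument closes the proof without any appeal to the algebraic closure of $\mathcal{U}$.
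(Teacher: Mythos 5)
Your forward direction is correct and is the same computation the paper uses. The compactness skeleton in the converse (define the decreasing $\lambda$-closed sets $F_n$, note their intersection is $g^{-1}(b)$, invoke saturation) is also the right frame. But the step you flag as ``requiring the most care'' is not merely delicate --- as you have set it up, it is a genuine gap, and the hand-wave you offer to close it does not work.

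The issue is precisely the meaning of the hypothesis $\lambda_n(b) \in (\Lambda_n g)(\Lambda_n V)$. The constructible image $(\Lambda_n g)(\Lambda_n V)$ is a scheme-theoretic (Chevalley) image, so an $\mathcal{U}$-point of it is guaranteed only to have a preimage in $(\Lambda_n V)(\bar{\mathcal{U}})$, where $\bar{\mathcal{U}}=\mathcal{U}^{\rm alg}$; this is exactly how the paper reads it when it writes ``there is $a_n \in (\Lambda_n V)(\bar{\cal U})$ such that $(\Lambda_n g)(a_n)=\lambda_n(b)$.'' Your argument, by contrast, silently upgrades this to $y_n \in \Lambda_n V(\mathcal{U})$, which is what lets you set $a_n := \rho_n(y_n) \in V(\mathcal{U})$ and conclude $F_n \neq \emptyset$. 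The intuitive repair you suggest --- that the $\Lambda_n$ construction, via the surjectivity of $(Y_0,\dots,Y_{p-1}) \mapsto \sum_j Y_j^p t^j$, forces $\mathcal{U}$-points of the image to lift to $\mathcal{U}$-points of the source --- is not a proof, and in the form you state it (at a single fixed level $n$) it is false: take $g(x)=x^p$ on $\mathbb{A}^1$ and $n=0$; the constructible image is all of $\mathbb{A}^1$, yet $g(\mathcal{U})=\mathcal{U}^p \subsetneq \mathcal{U}$. What is true is something more subtle: the \emph{tower} of conditions across all $n$ encodes enough $\lambda$-structure that the resulting partial type in $SCF_{p,1}$ is consistent. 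The paper makes this precise by passing to the $\lambda$-polynomial ring $H[X_\infty]$, assembling the ideal $I$ generated by $I^0$, the ideal of $V$, and the level-wise relations $(\Lambda_m g(X_{=m}))_\sigma - b_\sigma$, and then proving two things: that $I$ is proper (using your $\bar{\mathcal{U}}$-points at each finite level, exactly the content of their Claim 1) and --- crucially, and this is what is missing from your argument --- that $I$ is \emph{separable} (their Claim 2). Separability is what licenses the jump from ``has a zero over $\bar{\mathcal{U}}$'' to ``has a zero in the separably closed $\mathcal{U}$,'' and its verification uses the specific algebraic identities $\bigl(\Lambda_1\bigl((\Lambda_n g)_\sigma - b_\sigma\bigr)\bigr)_j = (\Lambda_{n+1}g)_{\sigma\conc j} - b_{\sigma\conc j}$, i.e., the way the defining relations propagate compatibly up the $\lambda$-tower. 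Without an argument of this kind your proof does not close.
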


\noindent \begin{proof} 

If $V \subset \mathbb{A}^r$ and $W \subset \mathbb{A}^k$,
then $g  =(g_1,\ldots g_k)$, where each $g_j$ is a polynomial map 
from $V$ to $\mathbb{A}^1$, and $\Lambda_m g_j$ is a $p^m$-tuple of elements in $K[X_{=m}]$ for $X$
a multivariable of length $r$.

One direction is clear: if $b=g(a)\in g(V({\cal U}))$, this follows from the
fact that $\lambda_n \circ g (a)=\Lambda_n g \circ \lambda_n (a)$.\\

\smallskip 

Now for the other direction, let  $b\in W({\cal U}) $ be such that  for every
  $n$, $ \lambda_n (b) \in  (\Lambda_n g) (\Lambda_n  V)$. 

Let $H$ be the separable closure of $K(b_\infty)$, $K \preceq
H \preceq {\cal U}$, and consider the following set $R$ of polynomials: 
$$  R \,  := \, \{(\Lambda_m g(X_{=m}))_\sigma  - b_\sigma \, ; \sigma  \in
  p^m, m \geq 0 \}.$$ 
Let $J_0 $ denote the ideal  of the variety $V$ over $H$, and  $I$ in $H[X_\infty]$, be the ideal generated by
$I^0$, $J_0  $ and $R$.  
     
\smallskip

\noindent{\bf Claim 1} $I\neq H[X_\infty]$

\smallskip

\noindent {\em Proof of Claim 1}: It suffices to find an infinite sequence $e_\infty$ in $\bar{\cal U}$ such that $I(e_\infty) \supset I$. By saturation of ${\cal U}^{\rm alg}$ as an algebraically closed field, it suffices to find, for each $n$, a sequence $e_{\le n}$ such that
$I(e_{\le n}) \supset I_{=n}:=I \cap H[X_{=n}]$. From the hypothesis on $b$, there is $a_n \in (\Lambda_n V)(\bar {\cal U})$ such that $(\Lambda_n g)  (a_n) = \lambda_n(b)$. Now for $0\le m \le n$, set $e_{=m}=\rho_{n-m}(a_{=n})$, and  it gives the required tuple $e_{\le n}$.

\medskip 

\noindent {\bf Claim 2} The ideal $I$ is separable.  

\smallskip 

\noindent {\em Proof of Claim 2}: Using the fact that $I^0 \subset I$, it can be easily shown that it suffices to show that for each $h\in I_{=n}:=I \cap H[X_{=n}]$, and each $j<p$, $(\Lambda_1 h)_j \in I$ (where $(\Lambda_1 h)_j$ is the $j^{th}$ component of $\Lambda_1 h$ viewed as a tuple of elements in $H[X_{=n+1}]$). Since such  $h$ can be written as $h=h_0+h_1+\sum_k P_k Q_k$ with
$h_0 \in I^0$, $h_1 \in J_0$ (which are separable), and $P_k \in H[X_{=n}]$, $Q_k \in R \cap  H[X_{=n}]$, and since we have the relations
$(\Lambda_1(f_1+f_2))_j = (\Lambda_1 f_1)_j + (\Lambda_1 f_2)_j$ and $(\Lambda_1 f_1f2)_j = \sum_{k+l-m=j}(\Lambda_1 f_1)_k (\Lambda_1 f_2)_l M_m$, it suffices to consider the case of $Q \in R \cap  H[X_{=n}]$. But
this is  clear by the definition of $R$: indeed, 
if $Q = (\Lambda_n g(X_{=n}))_\sigma  - b_\sigma$ for some $\sigma \in p^n$, $(\Lambda_1 Q)_j = (\Lambda_{n+1} g (X_{=n+1}))_{\sigma \conc j} - b_{\sigma \conc j}$.

\medskip 
Now since $I$ is a proper separable ideal containing $I^0$, there is $a\in {\cal U}$ such that $I(a_{\infty}) \supset I$, which implies that
$a\in V({\cal U})$ and $g(a)=b$.

\end{proof}

\medskip

The following result, a direct corollary of the previous Lemma,  is certainly well-known, probably in the formalism of Weil restrictions.
\begin{Lemma} \label{image}
Let $V,W$ be two irreducible varieties defined
  over $K$. Let $g$ be a morphism  from $V$ to $W$ defined
  over $K$. Then there is some $N$ such that 
 $$x \in g(V({\cal U})) \, \mbox{ iff }\, \lambda_N (x) \in ((\Lambda_N g) (\Lambda_N V))
 ({\cal U})  ).$$
\end{Lemma}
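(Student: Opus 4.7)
The plan is to deduce this corollary from the preceding Lemma by a compactness argument using the saturation of ${\cal U}$. For each $n \geq 0$, set
$S_n := \{x \in W({\cal U}) : \lambda_n(x) \in ((\Lambda_n g)(\Lambda_n V))({\cal U})\}$.
The preceding Lemma translates to $g(V({\cal U})) = \bigcap_{n \geq 0} S_n$, so the task reduces to showing that this intersection is already achieved at some finite stage $N$.

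First I would verify that each $S_n$ is a definable subset of $W({\cal U})$ (with parameters from $K$): by Chevalley's theorem the image $(\Lambda_n g)(\Lambda_n V)$ is a $K$-constructible subset of $\Lambda_n W$, and $\lambda_n$ is a definable map in the language of rings augmented by $\lambda$-functions. Second, I would check that the sequence $(S_n)_n$ is decreasing; this uses the compatibility $\rho_1 \circ \lambda_{n+1} = \lambda_n$ together with the functoriality of $\Lambda_1$ (which yields $\rho_1 \circ \Lambda_{n+1} g = \Lambda_n g \circ \rho_1$): if $\lambda_{n+1}(x) = (\Lambda_{n+1} g)(b)$ for some $b \in \Lambda_{n+1} V({\cal U})$, then $\lambda_n(x) = \rho_1(\lambda_{n+1}(x)) = (\Lambda_n g)(\rho_1(b))$ with $\rho_1(b) \in \Lambda_n V({\cal U})$, so $x \in S_n$.

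The crucial observation is that $g(V({\cal U}))$ is itself definable (over $K$), being existentially defined by the formula $\exists y\,(y \in V \wedge g(y) = x)$. Consequently the sets $S_n \setminus g(V({\cal U}))$ are definable, form a decreasing chain, and have empty intersection. By $\aleph_1$-saturation of ${\cal U}$ some $S_N \setminus g(V({\cal U}))$ must already be empty, giving $S_N \subseteq g(V({\cal U}))$; combined with the reverse inclusion $g(V({\cal U})) \subseteq S_n$ (valid for every $n$ by the preceding Lemma) this yields $S_N = g(V({\cal U}))$, which is precisely the statement of the Lemma.

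The main point, more than a genuine obstacle, is the recognition that the a priori only type-definable set $\bigcap_n S_n$ is in fact definable, as the image of a definable set under a definable map; once this is observed, saturation of ${\cal U}$ concludes the argument and no further algebraic-geometric input beyond the preceding Lemma and the functoriality of $\Lambda_n$ and $\rho_n$ is required.
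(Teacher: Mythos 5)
Your argument is correct and matches the paper's own proof: the key observation is that $g(V(\mathcal{U}))$ is definable (being the image of a definable set under a definable map), while by the preceding Lemma it is also the type-definable intersection $\bigcap_n S_n$, so compactness/saturation of $\mathcal{U}$ gives the finite stage $N$. The only step you omit is that the preceding Lemma is stated for \emph{affine} $V,W$, whereas here $V,W$ are arbitrary irreducible varieties; the paper explicitly notes that the reduction to the affine case is licit because the functors $\Lambda_n$ preserve open affine coverings, and you should say a word about this before invoking the earlier result.
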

\begin{proof}  Going from affine varieties to arbitrary algebraic varieties comes from the fact that the functors $\Lambda_n$ preserve open affine coverings. Now we know that $g(V({\cal U}))$ is a definable set in the
 separably closed field $\cal U$. By the previous  Lemma it is also
 infinitely definable as the infinite conjunction of the formulas 
$ \lambda_k (x) \in (\Lambda_k g) (\Lambda_k V)$. The result then
 follows by compactness.\end{proof}

\begin{proof}[Proof of Proposition \ref{closed}.]
For some $m$ and some subvariety $F_m$ of $\Lambda_m (X\times A)$, $F=\lambda_m^{-1}(F_m)$. We also have for any $n\ge m$ that 
$F=\lambda_{n}^{-1}(F_n)$, where $F_n=\Lambda_{n-m}F_m \subset \Lambda_n (X\times A)$.
Let $x$ be in $X({\cal U})$. By saturation, $x\in pr((X \times A^\sharp) \cap F)$ iff for every $n\ge m$, there is some $z_n \in p^nA({\cal U})$ such that $(x,z_n)\in F$, which in turn is equivalent to $(*)_n$: there is some $y_n\in A_n({\cal U})$ such that $(\lambda_n(x),y_n)\in F_n$. We now use Corollary \ref{image}: there is some $l=l_n$ (which depends on $n$ but not on $x$) such that $(*)_n$ is equivalent to $\lambda_l \circ \lambda_n(x) \in (\Lambda_l pr)(\Lambda_l F_n)$, where $\Lambda_l pr:\Lambda_{l+n} (X\times A) \simeq \Lambda_{l+n} X \times \Lambda_{l+n} A \to \Lambda_{l+n} X$ is still the projection on the first factor. Now look at the condition $(**)_n$: $\lambda_l \circ \lambda_n(x) \in pr(\Lambda_l F_n \cap (\Lambda_{l+n} X \times A_{l+n}))$. Recall that $A_{l+n}\subset \Lambda_{l+n}A$ characterizes $p^{l+n}$-divisible points. Since $A_{l+n}$ is complete, $(**)_n$ is a $\lambda$-closed condition. If $x$ satifies $(**)_n$, it satisfies $(*)_n$ a fortiori. Conversely, if $x\in pr((X \times A^\sharp) \cap F)$, the $z_n$'s appearing in the previous discussion can be chosen to be $p^\infty$-divisible, which in particular implies that we can find $y_n$ such that $\lambda_l(y_n)=\lambda_l \circ \lambda_n (z_n) \in A_{l+n}$, hence that $(**)_n$ is satisfied.\\
We conclude that $pr((X\times A^\sharp) \cap F)$ is given by the conjunction of the conditions $(**)_n$, hence is $\lambda$-closed.  
\end{proof}

From Proposition \ref{closed} and Proposition \ref{Noetherian}, we get:

\begin{Corollary} \label{corclosed}
Let $F$ be a definable closed subset of $A^d$ and $pr:A^d \to A^{d-1}$ the projection on the first coordinates. Then $pr(F({\cal U})\cap (A^\sharp)^d)$ is closed and relatively definable.
\end{Corollary}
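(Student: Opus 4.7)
The plan is to combine Proposition~\ref{closed} directly with the remark following the proof of Proposition~\ref{Noetherian} that every $\lambda$-closed subset of $(A^\sharp)^n$ is relatively definable. The only subtlety is matching up the projection: Proposition~\ref{closed} projects away one copy of $A^\sharp$ from a product of the form $X({\cal U})\times A^\sharp$, whereas here we need to project out the last coordinate of a set already intersected with $(A^\sharp)^d$. So the task is essentially to massage the input into the form that Proposition~\ref{closed} accepts, and then to read off definability from Proposition~\ref{Noetherian}.

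Concretely, I would set $X := A^{d-1}$, regarded as an algebraic variety over $K$, and view $F \subseteq A^d({\cal U}) = (X\times A)({\cal U})$ as a definable $\lambda$-closed subset. Proposition~\ref{closed} then furnishes a $\lambda$-closed set
\[
C \;:=\; pr\bigl( F \cap (X({\cal U})\times A^\sharp) \bigr) \;\subseteq\; A^{d-1}({\cal U}).
\]
Next I would verify the set-theoretic identity
\[
pr\bigl( F({\cal U}) \cap (A^\sharp)^d \bigr) \;=\; C \cap (A^\sharp)^{d-1},
\]
which is immediate by element chasing: a point $y \in (A^\sharp)^{d-1}$ lies on the left-hand side precisely when there exists $z \in A^\sharp$ with $(y,z)\in F$, which is the same as $y \in C$. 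Hence $pr(F({\cal U}) \cap (A^\sharp)^d)$ is closed in the subspace $\lambda$-topology on $(A^\sharp)^{d-1}$.

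Finally, I would invoke Proposition~\ref{Noetherian} and, more specifically, the remark immediately following its proof: every closed subset of $(A^\sharp)^n$ is relatively definable. Applied to $C \cap (A^\sharp)^{d-1}$, this yields both halves of the conclusion at once. Since both structural propositions have already been established, there is no genuine obstacle to overcome; the content is entirely the recognition that Proposition~\ref{closed} processes exactly the right ambient object $X({\cal U})\times A^\sharp$ and that its output, once re-intersected with $(A^\sharp)^{d-1}$, inherits relative definability for free from the Noetherianity result.
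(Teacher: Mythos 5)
Your proof is correct and matches the paper's intent: the paper derives Corollary~\ref{corclosed} by citing Propositions~\ref{closed} and~\ref{Noetherian} in exactly the way you spell out, with $X = A^{d-1}$, the elementary set-theoretic identity you verify, and the remark after Proposition~\ref{Noetherian} that closed subsets of $(A^\sharp)^n$ are relatively definable.
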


In order to go from the case of closed sets to the case of constructible
sets, we really follow the lines of the proof of quantifier elimination
for one-dimensional  Zariski
geometries given in \cite{Z} or \cite{HZ} (note that QE for one
dimensional Zariski geometries is a basic
consequence of the axioms and does not involve the deep dichotomy
result).  We know  that $A^\sharp =  Q_1+\ldots +Q_1 +Q_2+\ldots +
Q_2 + \ldots + Q_n+ \ldots + Q_n$, where $Q_i $ is the set of
  realizations of a thin minimal type.  It
  would be convenient to work with the relatively definable closed sets
  $\overline Q_i$, 
  the closure of $Q_i$ in 
  the sense of the $\lambda$-topology,  but it is not clear  a priori
  why it should be of topological dimension $1$. A more general fact
  would be that the $U$-rank of a type $t$ coincides with the topological
  dimension of $\overline t$ (the closure of the set of its
  realizations, or equivalently, the closed set given by the type ideal
  corresponding to $t$). It is true for types in $A^\sharp$, actually,
  but we know it only a posteriori, via
  the dichotomy theorem, which we do not want to use.   We know however
  that $U$-rank $(t)\leq dim(\overline t)$: because of the correspondence between closed type-definable sets and prime separable ideals in the suitable polynomials algebra, $dim(\overline t)$ is given by the separable depth of the corresponding ideal $I_t$, and the separable depth is a stability rank, hence greater than or equal to the $U$-rank (see \cite{Delon88}). 

It follows that if $F \subseteq A^\sharp$  is  irreducible closed of
topological dimension one, then $F$  has a unique type of U-rank
one, which is the type of maximal rank and is also its topological generic. 

So, rather than $Q_i$, we will consider  suitable relatively definable
irreducible closed sets $H_i$ of dimension $1$. We proceed as follows. 
 For each $i$, let $H_i$ be a relatively definable irreducible closed
 subset of ${A_i}^{\sharp}$ of dimension $1$ (it exists since we know
 that the topology is Noetherian, and since translations are
 homeomorphisms, we are allowed to replace $H_i$ by any of its
 translates in the following). By the comparison between the $U$-rank
 and the topological dimension above, it is clear that the generic
 type (in the topological sense) of $H_i$ is a minimal type, hence, as 
${A_i}^{\sharp}$ has no proper infinite type-definable subgroups, we can apply Zilber's indecomposability theorem to get $A_i^\sharp = H_i+\ldots+H_i$ ($m_i$ times for some $m_i$).\\  
Hence $A^{\sharp} = \sum_{m_1}H_{1} + \ldots + \sum_{m_n}H_{n}$,
where  the $H_{i}$ are relatively definable closed subsets of $A^{\sharp}$ of
(topological) dimension $1$. \\
Now for any formula $\phi(x,\overline a)$ with parameters in $A^\sharp$, $\mathcal A \models \exists x \, \phi(x,\overline a)$ if and only if 
$$\mathcal A \models \exists x_{1,1}\in H_{1}\ldots \exists x_{1,m_1}\in
H_{1} \ldots \exists x_{n,1}\in H_{n} \ldots \exists x_{n,m_n}\in H_{n}
\, \phi(\sum_{i,j}x_{i,j},\overline a).$$ 
Hence it is sufficient to
consider projections of the form $pr:H\times (A^\sharp)^d \to
(A^\sharp)^d$, where $H$ is one of the $H_{i}$'s.   
From Corollary
\ref{corclosed} and the fact that $H$ is closed, we get that $pr$ takes
closed sets to closed sets. From quantifier elimination in the separably
closed field $\cal U$ and Noetherianity in $\mathcal A$, we just have to consider projections of definable sets $F\setminus G$, where $G\subsetneq F \subseteq H\times (A^\sharp)^d$ are closed relatively definable sets, with $F$ irreducible.

\begin{Proposition} The projection $pr(F\setminus G)$ is constructible in $\mathcal A$.
\end{Proposition}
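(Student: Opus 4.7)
By Corollary \ref{corclosed}, $X := pr(F) \subseteq (A^\sharp)^d$ is closed and relatively definable; irreducibility of $F$ then forces $X$ to be irreducible. By Lemma \ref{basictopology}, for any topological generic $a$ of $X$ over a small model $\mathcal{U}_0$ of definition for $F$ and $G$, one has $G(a) \subsetneq F(a)$, hence $a \in pr(F \setminus G)$. Thus $\overline{pr(F \setminus G)} = X$, and the task reduces to showing that the complement $\{a \in X : F(a) \subseteq G(a)\}$ is a closed, or at least constructible, subset of $X$. My plan is induction on $\dim X$, splitting into two cases according to whether the topological generic fiber of $F \to X$ equals $H$ or is a proper (hence finite, since $\dim H = 1$ and $H$ is irreducible) closed subset of $H$.

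\medskip

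\noindent\textit{Case A: $F(a) = H$ for topological generic $a$ of $X$.} I first claim $F = H \times X$. Fix $h_0 \in H$. For any topological generic $a$ of $X$, $(h_0, a) \in F$. The set $\overline{\{(h_0, a) : a \text{ topological generic of } X\}} \subseteq F$ is contained in $\{h_0\} \times X$ and projects (by Corollary \ref{corclosed}) to a closed subset of $X$ containing all realizations of the generic type of $X$; by strict monotonicity of dimension on irreducible chains, this projection equals $X$, so the closure equals $\{h_0\} \times X$. Hence $\{h_0\} \times X \subseteq F$, and varying $h_0$ over $H$ gives $H \times X \subseteq F$, so $F = H \times X$. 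Then
\[ pr(F \setminus G) = \{a \in X : G(a) \neq H\} = X \setminus \bigcap_{h \in H} \{a \in X : (h, a) \in G\}. \]
Each slice $\{a \in X : (h, a) \in G\}$ is closed, being the preimage of the closed set $G$ under the continuous map $a \mapsto (h, a)$; the intersection is closed by Noetherianity (Proposition \ref{Noetherian}); so $pr(F \setminus G)$ is open in $X$, in particular constructible.

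\medskip

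\noindent\textit{Case B: $F(a)$ finite for topological generic $a$,} equivalently $\dim F = \dim X$. Decompose $G = G_1 \cup \cdots \cup G_k$ into irreducible components. Each $G_i \subsetneq F$ is a proper irreducible closed subset, so by strict monotonicity of dimension on chains of irreducible closed sets (from the bound $U\text{-rank}(t) \leq \dim(\overline{t})$ together with Proposition \ref{Noetherian}), $\dim G_i < \dim X$; hence $\dim pr(G_i) < \dim X$, and $pr(G) = \bigcup_i pr(G_i) \subsetneq X$. The open set $X_0 := X \setminus pr(G)$ lies inside $pr(F \setminus G)$, since $G(a) = \emptyset \neq F(a)$ on $X_0$. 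Setting $F'' := F \cap (H \times pr(G))$, one has $\dim pr(F'') \leq \dim pr(G) < \dim X$; decomposing $F''$ into irreducible components $F''_1, \ldots, F''_m$ and applying the induction hypothesis to each pair $(F''_i, G \cap F''_i)$ (with the trivial convention that $pr(F''_i \setminus G) = \emptyset$ if $F''_i \subseteq G$) yields that $pr(F'' \setminus G) = \bigcup_i pr(F''_i \setminus G)$ is constructible, whence $pr(F \setminus G) = X_0 \cup pr(F'' \setminus G)$ is constructible. The base case $\dim X = 0$ is immediate: $X$ is then a finite union of $\mathcal{U}_0$-Galois orbits and $pr(F \setminus G)$ is automatically a union of such orbits, hence closed.

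\medskip

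\noindent\textit{Main obstacle.} The trickiest step is Case A: establishing $F = H \times X$ relies on density of topological generics (itself a consequence of strict monotonicity of dimension), and the constructibility conclusion then hinges on the closure of the \emph{infinite} intersection $\bigcap_{h \in H} \{a : (h, a) \in G\}$, which is valid only because of the Noetherianity of the $\lambda$-topology established in Proposition \ref{Noetherian}. The dimension-based induction in Case B is more routine, but still requires the strict inequality $\dim G_i < \dim F$ for irreducible proper closed subsets, again a consequence of Proposition \ref{Noetherian}.
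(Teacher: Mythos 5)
Your proof is correct and reaches the same conclusion as the paper's, but it organizes the induction differently and, more substantively, replaces one of the paper's key arguments. The paper inducts on $\dim F$ and splits into three cases according to the relations among $F_0=\bigcap_{h\in H}F_h$, $F_1=pr(F)$, $G_1=pr(G)$: case~1 ($F_0=F_1$) matches your Case~A; case~2 ($G_1\subsetneq F_1$) is a straightforward recursion; and case~3 ($F_0\subsetneq F_1=G_1$) is shown to be \emph{impossible} by an argument that uses the second half of Lemma~\ref{basictopology} in an essential way: $F(a)$ is finite and irreducible over $\mathcal{U}_0(a_\infty)$, hence a single Galois orbit over that definably closed field, so the nonempty proper closed $\mathcal{U}_0(a_\infty)$-invariant subset $G(a)$ cannot exist. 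You instead induct on $\dim pr(F)$ and, in your Case~B (finite generic fibre), avoid case~3 entirely by a dimension count: since the generic fibre is finite, $\dim F=\dim pr(F)$, so by strict monotonicity of dimension each irreducible component $G_i\subsetneq F$ has $\dim pr(G_i)\le\dim G_i<\dim F=\dim pr(F)$, giving $pr(G)\subsetneq pr(F)$ outright. This is a genuinely different route: it uses only the first part of Lemma~\ref{basictopology} (that $G(a)\subsetneq F(a)$, and actually only implicitly for your opening density observation, which the induction does not use) and buys you a somewhat more elementary argument, at the cost of needing the fibre-dimension identity $\dim F=\dim pr(F)+\dim(\text{generic fibre})$, which is true here via the transcendence-degree reading of $\lambda$-dimension but deserves to be spelled out. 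Two small points of care: in Case~A the cleanest way to get $F=H\times X$ is to observe that $F_0=\bigcap_{h\in H}F_h$ is a closed, $\mathcal{U}_0$-invariant (hence, by Noetherianity and Lemma~\ref{invariant}, relatively $\mathcal{U}_0$-definable) subset of $X$ containing the generic of $X$, so $F_0=X$; your version fixing $h_0$ first works but the parameter-dependence of the closure needs this invariance step to close. And since you have already shown $X$ irreducible, your base case $\dim X=0$ simply makes $X$ a singleton, which is slightly cleaner than the appeal to Galois orbits.
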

\begin{proof}
We proceed by induction on $dim(F)$. The case $dim(F)=0$ is obvious since it implies that $F$ is a singleton.\\
Now $dim(F)=k+1$. We consider the closed sets $F_1=pr(F)$, $G_1=pr(G)$, $F_0=\{\overline y\in (A^\sharp)^d \mid \forall x \in H, (x,\overline y) \in F\}=\bigcap_{x\in H} F_x$, where $F_x=\{\overline y\in (A^\sharp)^d \mid (x,\overline y) \in F\}$ is closed (note that we allow parameters in $A^\sharp$ in the definition of the topology), and $G_0=\{\overline y\in (A^\sharp)^d \mid \forall x \in H, (x,\overline y) \in G\}$. There are three cases:
\begin{enumerate}
\item if $F_0=F_1$, we see easily that $pr(F\setminus G)=F_0\setminus G_0$, hence is constructible.
\item if $G_1 \subsetneq F_1$, we have a proper closed subset $(H\times G_1)\cap F \subsetneq F$, hence $dim((H\times G_1)\cap F)<dim(F)$ since $F$ is irreducible. But we can write $pr(F\setminus G)=F_1 \setminus G_1 \cup pr(((H\times G_1)\cap F) \setminus G)$, and the result comes from the induction hypothesis.
\item if $F_0 \subsetneq F_1 =G_1$, we consider a generic point $a$ of
  $F_1$ over $\mathcal{U}_0$, a model of definition for $F$ and $G$
  (note that $F_1$ is irreducible since $F$ is). In particular $a\not
  \in F_0$, which implies that the fibre $F(a)$ is finite, as it is a
  proper closed set of  irreducible dimension one $H$. Furthermore, by Lemma \ref{basictopology}, $G(a)\subsetneq F(a)$, and $F(a)$ is irreducible as a closed set over $\mathcal{U}_0(a_\infty)$. It follows that $F(a)$ is the orbit of any of its points under 
$\text{Aut}({\cal U}/\mathcal {U}_{0}(a_{\infty}))$ (note that ${\mathcal U}_0(a_\infty)$ is definably closed in $\cal U$, see \cite{Delon}), and a fortiori, it is the orbit of $G(a)$. Hence $F(a)=G(a)$, a contradiction.
\end{enumerate}
\end{proof}

So we have proved Theorem \ref{QEthm}.

\section{Deriving  function field ML from function field MM}

\subsection{The characteristic $0$ case}\label{Char.0}

We first deal with the characteristic $0$ case. \\

We work with the notation introduced in Section
\ref{preliminaries}: $k = \mathbb C$, $K = k(t)^{\rm{alg}}$,
$K^{\rm{diff}}$ is a  differential closure of the differential field
$(K,d/dt)$ and $\cal U$ a saturated elementary extension of
$K^{\rm{diff}}$. In fact because the  theory $DCF_0$ is $\omega$-stable,
we will mostly be working in $K^{\rm{diff}}$, without needing to go up
to the big model $\cal U$, contrary to the characteristic $p$ case where
it is essential, in the  course of the proof,  to go up to a sufficiently saturated model. 

\noindent As explained before, we want to deduce Mordell-Lang from Manin-Mumford
and the Kernel Theorem, avoiding reference to the dichotomy theorem for strongly minimal sets in differentially  closed fields and/or the local modularity and  strong minimality of $A^{\sharp}$ when $A$ is simple with $k$-trace $0$. But we will use relatively softer ingredients of Hrushovski's proof in \cite{Hrushovski}, among them the  {\em weak socle theorem}, which we recall below.  

\begin{Definitions} {\rm 1.} Let $G$ be a commutative connected group of finite
  Morley rank, definable in some ambient stable structure $M$. 
We say that $G$ is generated (abstractly) by some sets $X_1,\ldots, X_n$
if $G \subset acl ( F \cup  X_1\cup \ldots \cup X_n)$, where $F$ is a
finite set. If $G \subset acl (F \cup X)$, where $X$ is a strongly
minimal set, $G$ is said to be {\em almost strongly minimal}.\\
{\rm 2.} The model-theoretic (or stability-theoretic) {\em socle} $S(G)$ of $G$,
is the greatest connected definable subgroup of $G$ which is generated  by strongly minimal definable subsets of $G$.\\ 
{\rm 3.} For $X$ a definable subset of $G$ with Morley degree $1$, define the model theoretic {\em stabilizer} of $X$ in $G$, $Stab_{G}(X)$,  to be $\{g\in G:MR( X\cap (X+g)) = MR(X)\}$ (so the stabilizer of the generic type of $X$ over $M$).\\
{\rm 4.} A definable subgroup of $G$, $H$, defined over some $B$,  is said to be {\em rigid} if, passing to a saturated model, all connected definable subgroups of $H$ are defined over $acl(B)$. 
\end{Definitions}

We will need to know some properties of socles:

\begin{Fact} \label{socleproperty} {\rm (Lemma 4.6 in \cite{Hrushovski},
    or Section 7 in \cite{lascar})} The group $S(G)$ 
is an almost direct sum of pairwise 
  orthogonal definable groups $G_i$, where each $G_i$ is almost strongly
  minimal.  
\end{Fact}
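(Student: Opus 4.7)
The plan is to extract a decomposition of $S(G)$ by combining Zilber's indecomposability theorem with the non-orthogonality geometry of strongly minimal types. First I would replace the abstract generating family in the definition of $S(G)$ by a \emph{finite} one: since $G$ has finite Morley rank and $S(G)$ is by definition connected and generated by a collection of strongly minimal definable sets, finitely many such sets $X_{1},\ldots,X_{n}$ already suffice. After translating so each $X_{i}$ contains $0$, Zilber's indecomposability theorem gives, for each $i$, a connected definable subgroup $H_{i}\leq S(G)$ with $X_{i}\subseteq H_{i}\subseteq\mathrm{acl}(F_{i}\cup X_{i})$ for some finite $F_{i}$. In particular each $H_{i}$ is almost strongly minimal, and by construction $S(G)=H_{1}+\cdots+H_{n}$.

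Next I would organize the $H_{i}$ according to the non-orthogonality class of a generic type of $X_{i}$ (equivalently, of $H_{i}$). Non-orthogonality of strongly minimal types is an equivalence relation, so we obtain classes $\mathcal{C}_{1},\ldots,\mathcal{C}_{k}$; put $G_{j}:=\sum_{i\in\mathcal{C}_{j}}H_{i}$. The $G_{j}$ are connected definable subgroups, and $S(G)=G_{1}+\cdots+G_{k}$. The key point is that each $G_{j}$ is again almost strongly minimal: pick any $i_{0}\in\mathcal{C}_{j}$ and set $X:=X_{i_{0}}$. For every other $i\in\mathcal{C}_{j}$, non-orthogonality of the strongly minimal sets $X_{i}$ and $X$ gives a finite-to-finite definable correspondence between them, so after adding finitely many parameters $X_{i}\subseteq\mathrm{acl}(X\cup F'_{i})$; consequently $H_{i}\subseteq\mathrm{acl}(X\cup F''_{i})$, and summing finitely many of these keeps us inside $\mathrm{acl}(X\cup F)$ for a single finite $F$. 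Hence $G_{j}$ is almost strongly minimal, witnessed by the strongly minimal set $X$.

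Finally I would verify pairwise orthogonality of the $G_{j}$ and that the sum is almost direct. Pairwise orthogonality is immediate from the construction: an almost strongly minimal connected group is (up to almost direct factors) controlled by the non-orthogonality class of its generic strongly minimal set, and we placed all non-orthogonal pieces into the same $\mathcal{C}_{j}$. For almost directness, if $G_{j}\cap G_{l}$ were infinite for some $j\neq l$, then its connected component would be an infinite definable subgroup contained in both $G_{j}$ and $G_{l}$, so its generic type would be non-orthogonal to the generic strongly minimal types of both, contradicting orthogonality; hence $G_{j}\cap G_{l}$ is finite, i.e.\ $\sum G_{j}$ is almost direct and equals $S(G)$. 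The main obstacle in this plan is the second step: one needs to know that non-orthogonality is transitive on the relevant strongly minimal types and that the finite-to-finite correspondence can be promoted to an almost-strong-minimality witness on the sum of finitely many $H_{i}$'s — this is exactly where regularity of strongly minimal types and a careful parameter-bookkeeping argument are used, and it is precisely the content of Lemma~4.6 of \cite{Hrushovski} (and of the relevant part of \cite{lascar}).
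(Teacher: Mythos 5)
The paper does not prove this Fact itself: it is cited verbatim from Lemma~4.6 of \cite{Hrushovski} and Section~7 of \cite{lascar}, so there is no in-text proof to compare against. Your sketch follows the standard route used in those sources (reduce to finitely many strongly minimal generators by finite Morley rank, apply Zilber indecomposability to get almost strongly minimal connected groups $H_i$, cluster by non-orthogonality class, and then check almost-strong-minimality, orthogonality, and almost-directness of the resulting summands $G_j$), and the steps are correct in outline; the parts you flag as requiring care (promoting a pairwise non-orthogonality of strongly minimal sets to a single finite-parameter algebraicity witness for the whole block, and the orthogonality-transfer needed in the almost-directness argument) are indeed exactly the technical content buried in the cited lemmas, so this is a faithful reconstruction rather than a different approach.
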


In this context, Hrushovski's so called ``weak socle theorem'' can be
stated as:

\begin{Proposition} \label{socletheorem} ({\rm (Prop. 4.3 in \cite{Hrushovski} or Prop; 2.10 in \cite{bouscaren})} Let G be a commutative connected
  group of finite Morley rank. Suppose that the socle of $G$, $ S(G)$,
  is rigid. Let $X$ be a definable subset of $G$ of Morley degree one, such that  $Stab_{G}(X)$ is finite. Then, up to subtracting a definable subset of $X$ of strictly smaller Morley rank than that of $X$,  some translate of $X$ is contained in $S(G)$.
\end{Proposition}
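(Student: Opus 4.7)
\medskip

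\noindent\textbf{Plan.} The reduction is to show that, for $M$ a small model containing parameters for $X$, $G$, and $S(G)$, and $a$ a generic realization of $X$ over $M$, the class $\pi(a) \in G/S(G)$ belongs to $\mathrm{acl}(M)$, where $\pi : G \to G/S(G)$ is the natural quotient. Granting this, since $X$ has Morley degree one, a unique coset $c + S(G)$ meets $X$ in a generic subset, and discarding from $X$ the intersection with the other finitely many $\mathrm{Aut}(\mathcal{U}/M)$-conjugate cosets of $c+S(G)$ yields a definable $X_0 \subseteq X$ of the same Morley rank as $X$ with $X_0 \subseteq c + S(G)$. Translating $X$ by $-c$ then places a translate of $X$ inside $S(G)$ up to a definable subset of strictly smaller Morley rank, which is the desired conclusion.

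\smallskip
\noindent\textbf{Towards algebraicity.} I would proceed by contradiction, assuming $q := \mathrm{tp}(\pi(a)/M)$ has positive Morley rank. In the stable group $G/S(G)$ of finite Morley rank, the stabilizer $\mathrm{Stab}_{G/S(G)}(q)$ is a nontrivial connected definable subgroup, whose preimage under $\pi$ is a connected definable subgroup $H$ of $G$ properly containing $S(G)$. Independently, take realizations $a_1,\ldots,a_n$ of $p := \mathrm{tp}(a/M)$, independent over $M$, and apply Zilber's indecomposability theorem to a suitable family of indecomposable translates built out of $\{X - a_i\}$ (each of which contains $0$, has Morley degree one, and has stabilizer contained in $\mathrm{Stab}_G(X)$): this produces a connected definable subgroup $N \le G$. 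A direct computation shows $\pi(N)$ contains the differences $\pi(a_i) - \pi(a_j)$, so by the positive rank of $q$ the image $\pi(N)$ has positive Morley rank, whence $N \not\subseteq S(G)$ and indeed $N$ maps onto a positive-rank connected subgroup of $H/S(G)$.

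\smallskip
\noindent\textbf{Main obstacle: exploiting rigidity.} The crux, and the step I expect to be most delicate, is deriving a contradiction from the coexistence of $N$, the finiteness of $\mathrm{Stab}_G(X)$, and the rigidity of $S(G)$. Via Fact~\ref{socleproperty}, write $S(G)$ as an almost direct sum of pairwise orthogonal almost strongly minimal subgroups $G_1,\ldots,G_r$; rigidity forces every connected definable subgroup of $S(G)$ to be defined over $\mathrm{acl}$ of a fixed small parameter set, independent of the choice of $a_1,\ldots,a_n$. Combined with pairwise orthogonality of the $G_i$, this controls $N \cap S(G)$ tightly: its connected components in each $G_i$ are $M$-definable, so the positive-rank part of $N$ transverse to $S(G)$ is carried faithfully onto $\pi(N)$. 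A rank/stabilizer argument should then identify a nontrivial element of $N$ which stabilizes the generic type of a suitable translate of $X$, contradicting finiteness of $\mathrm{Stab}_G(X)$. Implementing this rigidity-driven computation cleanly, and correctly accounting for the almost-sum decomposition of $S(G)$, is the principal difficulty; everything else is a formal arrangement of standard stable-group machinery.
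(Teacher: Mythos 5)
The paper does not supply a proof of this proposition; it is quoted as known, from Prop.\ 4.3 of \cite{Hrushovski} and Prop.\ 2.10 of \cite{bouscaren}, so there is no in-paper argument to compare against. Judged on its own terms, the reduction in your first paragraph is correct, and the overall plan --- show $\pi(a)\in acl(M)$ for $a$ generic in $X$ over a small model $M$, then discard a lower-rank piece and translate into $S(G)$ --- does match the shape of the arguments in the cited sources.

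That said, the middle of the sketch contains errors and the decisive step is missing. The claim that $Stab_{G/S(G)}(q)$ is a nontrivial connected definable subgroup is unjustified and false in general: a type of positive Morley rank in a connected commutative group of finite Morley rank can have trivial stabilizer (take the generic type of the graph of $x\mapsto x^2$ inside the additive group of $K^2$, $K$ algebraically closed of characteristic $0$). Likewise, differences $\pi(a_i)-\pi(a_j)$ of independent realizations of $q$ need not lie in $Stab(q)$, so the assertion that $\pi(N)$ maps into $H/S(G)$ does not follow. Morley degree one also does not by itself give indecomposability of $X-a_i$, so the invocation of Zilber's indecomposability theorem needs an extra reduction to a genuinely indecomposable piece. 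Most seriously, the contradiction to be extracted from $N\not\subseteq S(G)$ together with rigidity and the almost-direct-sum decomposition of $S(G)$ --- which is the entire substance of the theorem --- is replaced by the hope that ``a rank/stabilizer argument should then identify a nontrivial element of $N$ which stabilizes the generic type of a suitable translate of $X$.'' You are candid that this is the principal difficulty, but it is not carried out; what you have is a reasonable frame around the problem rather than a proof of it.
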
  

The reason we call the above Proposition the weak socle theorem, is because a stronger statement is proved in \cite{Pillay-ML} (see Theorem 2.1 there) in the special case of algebraic $D$-groups, and we have sometimes refered to the latter as ``the socle theorem". \\

Let $A$ be an  abelian variety over $K$ and $A^\sharp := A^\sharp(\cal U )$, the smallest definable subgroup of $A(\cal U )$ which contains the torsion of $A$ (see Section \ref{sharp}).

We have already seen some basic properties of $A^\sharp$ (Fact \ref{FactAsharpbasic}). Recall that $A^\sharp$ is definable over $K$,  connected with finite Morley rank  and that if $A$ is simple, $A^\sharp$ is a g-minimal group. 

We are going to use the following now  classical fact (see for example 6.5 in
\cite{ziegler}): 

\begin{Fact}\label{nonorthogonal} Let $H$ be an almost strongly minimal  definable group in $\cal U$, non orthogonal to the field of constants $\CC$. Then there is an algebraic group $R$ over $\CC$ and a definable isogeny from $H$ onto $R(\CC)$.\end{Fact}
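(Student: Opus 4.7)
The plan is to chain three transitions from the model theory of $DCF_0$: (i) non-orthogonality $\Rightarrow$ almost $\CC$-internality for $H$, (ii) almost $\CC$-internality $\Rightarrow$ a definable isogeny onto a group interpretable in $\CC$, and (iii) Weil--Hrushovski reconstruction of that interpretable group as $R(\CC)$ for some algebraic group $R$ over $\CC$.

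For (i), I would fix a finite set $F$ and a definable strongly minimal $X$ with $H\subseteq\mathrm{acl}(F\cup X)$, witnessing almost strong minimality of $H$. The generic type of $H$ over $F$ is one-step analysable in the minimal type $p$ of $X$, so $H\not\perp\CC$ forces $p\not\perp\CC$. Because $\CC$ is stably embedded in $\cal U$ with induced structure that of a pure algebraically closed field, non-orthogonality of $p$ to $\CC$ is witnessed, after a finite base change to some $F'\supseteq F$, by a definable finite-to-finite correspondence between realisations of $p$ and tuples from $\CC$. It follows that $X\subseteq\mathrm{acl}(F'\cup\CC)$ and therefore $H\subseteq\mathrm{acl}(F'\cup\CC)$: the group $H$ is almost $\CC$-internal over $F'$.

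For (ii), I would apply the standard internality/binding-group analysis for groups of finite Morley rank (see \cite{Pillay-book}). Since $H$ is connected, of finite Morley rank and almost $\CC$-internal over $F'$, one extracts an $F'$-definable surjective homomorphism $\pi:H\to H'$ with finite kernel, where $H'$ is a connected group interpretable in the induced structure on $\CC$; concretely, $\ker\pi$ can be taken to be the (necessarily finite) intersection of the stabilisers in $H$ of a finite family of $\CC$-valued coordinatisations witnessing internality. For (iii), since the induced structure on $\CC$ is precisely that of a pure algebraically closed field, the Weil--Hrushovski theorem on groups interpretable in $\mathrm{ACF}$ identifies $H'$ with $R(\CC)$ for some algebraic group $R$ defined over $\CC$. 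Composing, one obtains the required definable isogeny $H\to R(\CC)$.

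The step demanding most care is (i): one must propagate non-orthogonality from $H$ back to the defining strongly minimal set $X$, and then convert $p\not\perp\CC$ into a \emph{uniform} definable correspondence with $\CC$ applicable to all of $H$, not merely to a generic, while absorbing $F$ into the new base in the process. Steps (ii) and (iii) are routine applications of standard theorems of the model theory of $\omega$-stable groups and of $\mathrm{ACF}$.
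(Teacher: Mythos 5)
The paper does not actually prove this Fact: it is stated as ``classical'' with a pointer to Theorem 6.5 of Ziegler's survey \cite{ziegler}, so there is no in-paper argument to compare against. Your three-step outline is, however, the standard proof of this result and it is essentially correct: non-orthogonality to $\CC$ propagates from $H$ back to the coordinatising strongly minimal set $X$ (because $H\subseteq\mathrm{acl}(F\cup X)$, so a witness $h\not\downfree_B c$ with $h\in\mathrm{acl}(B\bar x)$, $\bar x$ a $B$-independent tuple from $X$, forces some $x_i\not\downfree_{Bx_{<i}}c$); since the generic type $p$ of $X$ is minimal, a forking extension is algebraic, so after a base change one gets a realisation of $p$ in $\mathrm{acl}(F'\cup\CC)$, and by compactness together with strong minimality this upgrades to $X\subseteq\mathrm{acl}(F''\cup\CC)$, hence almost $\CC$-internality of $H$; the standard structure theory for connected finite Morley rank groups almost internal to a stably embedded set then gives a definable surjective homomorphism with finite kernel onto a $\CC$-internal group, which Weil--Hrushovski identifies with $R(\CC)$ for an algebraic group $R$ over $\CC$. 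Two small points of precision. In step (i) the phrase ``finite-to-finite correspondence'' overstates what is needed and is not quite what non-orthogonality delivers directly; the correct intermediate is $a\in\mathrm{acl}(B,c)$ for a nonforking realisation $a$ of $p|B$ and $c\in\CC^n$, which already suffices. In step (ii) the parenthetical ``concrete'' description of $\ker\pi$ via stabilisers of coordinatisations is loose and not how the isogeny is normally extracted, but the surrounding appeal to the standard internality-to-isogeny theorem is correct and this does not affect the argument.
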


The following, true in characteristic $0$,  are also well known, except
maybe (iv).

\begin{Lemma} \label{Asharpchar0} 
(i) $A({\cal U})/A^{\sharp}$ embeds definably (in the sense of $DCF_0$) in (the group of ${\cal U}$-points of) a unipotent algebraic group over $\cal U$. 
\newline
(ii) $A^{\sharp}$ is rigid. 
\newline 
(iii) If $A$ is simple, then $A$ has $k$-trace $0$ if and only if it is not isogenous to an abelian variety over $k$ if and only if $A^\sharp $ is orthogonal to the field of constants $\CC$.
\newline  
(iv) If $H$ is a connected finite Morley rank definable subgroup of $A = A({\cal U})$ containing $A^{\sharp}$, then $S(H) = A^{\sharp}$.
\end{Lemma}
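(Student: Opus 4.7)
For part (i), the plan is to invoke the Buium--Manin map: the universal vectorial extension of $A$ carries a unique $D$-group structure, yielding a definable (in $DCF_0$) group homomorphism $\mu_A \colon A({\cal U}) \to \mathrm{Lie}(A)({\cal U}) \cong \mathbb{G}_a^{\dim A}({\cal U})$. Torsion automatically lies in $\ker(\mu_A)$ (the target is torsion-free), so $A^\sharp \subseteq \ker(\mu_A)$, and the reverse inclusion follows because the torsion is Kolchin-dense in $\ker(\mu_A)$, a classical theorem of Buium based on Chai. Thus $\ker(\mu_A) = A^\sharp$, and $A({\cal U})/A^\sharp$ embeds definably in the unipotent group $\mathbb{G}_a^{\dim A}$.

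For part (ii), the plan is to show every connected definable subgroup $N$ of $A^\sharp$ is $B^\sharp$ for some abelian subvariety $B \leq A$. Take $B$ to be the Zariski closure of $N$ in $A$: since $N$ is a subgroup, $B$ is a connected algebraic subgroup of $A$, hence an abelian subvariety. Using the decomposition $A \sim A_1 + \cdots + A_n$ into simples and Fact \ref{FactAsharpbasic}(iii) (no proper infinite type-definable subgroups of $A_i^\sharp$), one verifies $N = B^\sharp$. Since abelian subvarieties of $A$ are defined over the field of definition of $A$, namely $K$, over which $A^\sharp$ is itself defined, every such $N$ is $acl(K)$-definable, which is rigidity.

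For part (iii), the equivalence "$k$-trace $0$" iff "$A$ is not isogenous to any abelian variety over $k$" is the definition of $k$-trace for simple $A$. If $A \sim B_K$ with $B$ over $k$, then up to the isogeny $B(k) \subseteq A^\sharp$, so $A^\sharp$ is visibly non-orthogonal to $\CC$. Conversely, assume $A^\sharp \not\perp \CC$. By Fact \ref{FactAsharpbasic}(iii), $A^\sharp$ is $g$-minimal, hence almost strongly minimal by Zilber indecomposability; Fact \ref{nonorthogonal} then provides a definable isogeny $A^\sharp \to R(\CC)$ for some algebraic group $R$ over $\CC$. Taking Zariski closures in $A$ and using that $A^\sharp$ is Zariski-dense in $A$ by Fact \ref{FactAsharpbasic}(i), $A$ is isogenous to $R_K$, so by dimension $R$ must be an abelian variety and the $k$-trace is nonzero.

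For part (iv), I establish both inclusions. For $A^\sharp \subseteq S(H)$: each $A_i^\sharp$ is $g$-minimal by Fact \ref{FactAsharpbasic}(iii), hence almost strongly minimal; its generating strongly minimal subset is a strongly minimal subset of $H$ whose generated connected subgroup contains $A_i^\sharp$, so $A_i^\sharp \subseteq S(H)$ and $A^\sharp = \sum_i A_i^\sharp \subseteq S(H)$. For the reverse inclusion, Fact \ref{socleproperty} writes $S(H) = G_1 + \cdots + G_m$ as an almost direct sum of pairwise orthogonal almost strongly minimal subgroups. Fix $j$. If $G_j \perp \CC$, then since $H/A^\sharp$ embeds in a vector group by (i) and is thus $\CC$-internal, the image of $G_j$ in $H/A^\sharp$ is finite, hence trivial by connectedness, and $G_j \subseteq A^\sharp$. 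If $G_j \not\perp \CC$, then by Fact \ref{nonorthogonal} there is a definable isogeny $G_j \to R(\CC)$; arguing as in (iii), the Zariski closure $B_j$ of $G_j$ in $A$ is an abelian subvariety isogenous to $R_K$, and the isogeny forces $G_j$ to sit inside $B_j(\CC) \subseteq A^\sharp$. The main obstacle is this last step: carefully verifying that the definable isogeny onto a constant group actually identifies $G_j$ with a finite cover of the $\CC$-points of $B_j$, rather than some "twist" of them; this is the only delicate piece of the argument, and handling it cleanly requires that every abelian subvariety of $A$ with nontrivial $k$-trace contributes its constant points to $A^\sharp$.
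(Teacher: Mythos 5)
Your treatment of (i), (ii), and the $A^\sharp \subseteq S(H)$ half of (iv) is sound and close to the paper's (the paper deduces (i) from Buium's general fact that $G/H$ embeds in a vector group whenever $H$ is a Zariski-dense definable subgroup of a commutative connected algebraic group $G$, and for (ii) simply cites \cite{BP}, section 3.1 for the identification of connected definable subgroups of $A^\sharp$ with the $B^\sharp$, rather than reproving it; your sketches of both are compatible). Your (iv) otherwise takes a genuinely different route: rather than reducing to simple $A$ and arguing by contradiction via Zariski-denseness of a ``stray'' summand $V$ as the paper does, you directly establish $S(H)\subseteq A^\sharp$ by analyzing each factor $G_j$ of the socle decomposition; the $G_j\perp\CC$ case, using $\CC$-internality of $H/A^\sharp$ to kill the image, is a clean and correct step.

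There is, however, a real gap in the nonorthogonal cases of both (iii) and (iv), precisely at the point you flag as delicate. Fact \ref{nonorthogonal} gives a definable isogeny $f:A^\sharp\to R(\CC)$ (resp.\ $G_j\to R(\CC)$), and you propose to ``take Zariski closures'' to promote $f$ to an algebraic isogeny $A\to R$ (resp.\ $B_j\to R$). But $f$ is a priori only \emph{differential}-rational: its formula can involve $\partial$, so it does not restrict to a rational map on the abelian variety and no Zariski-closure argument will make it algebraic. Concretely, if $B$ is an elliptic curve over $\CC$ and $G$ is the rank-$2$ preimage in $B({\cal U})$ of a copy of $\CC$ inside $B({\cal U})/B(\CC)$, then $G$ is $\CC$-internal and definably isogenous to some $2$-dimensional $R(\CC)$, yet no algebraic extension $B\to R$ can exist because $\dim B=1<2=\dim R$. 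The paper sidesteps this by first passing to the \emph{dual} isogeny $R(\CC)\to A^\sharp$; a definable map \emph{from} the constants is automatically given by honest rational functions (QE in $DCF_0$ together with stable embeddedness of $\CC$), so this direction does extend to an algebraic surjection $R\to A$, exhibiting $A$ as a quotient of $R$ and hence as descending to $\CC$. The same fix repairs your $G_j\not\perp\CC$ case in (iv): the dual isogeny $R(\CC)\to G_j$ extends to an algebraic surjection $R\to B_j$, whose image on $\CC$-points is $B_j(\CC)$ on one hand and $G_j$ on the other, giving $G_j=B_j(\CC)=B_j^\sharp\subseteq A^\sharp$. So your strategy is salvageable, but the direction of the isogeny is not a cosmetic detail; it is the load-bearing step, and as written the argument fails.
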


\noindent {\em Remark}: We will only use (iv) with the extra assumption that $A$ has $k$-trace $0$,
but we give below the proof for the general case. 

\begin{proof} The following fundamental fact, due to Buium
  (\cite{Buium-book}), and  also proved in \cite{Pillay}, is essential:
\newline
(*) Suppose $G = G({\cal U})$ is a commutative connected algebraic group over $\cal U$ and $H$ is a Zariski-dense definable subgroup. Then $G/H$ definably embeds in a unipotent algebraic group, namely $({\cal U},+)^{d}$ for some $d$. 
\newline
(i)   is given by (*).
\newline
(ii) It follows for example from \cite{BP}, section 3.1, that any definable connected
subgroup $H$ of $A^\sharp$ is of the form $B^\sharp$, for some abelian
subvariety $B$ of $A$. Hence it is the Kolchin closure of the torsion
of $B$ which is contained in the  torsion of $A$, hence in $A(K)$. This
shows that $H$ is itself also defined over $K$. 
\newline
(iii) The main point is to show that if $A^{\sharp}$ is nonorthogonal to the field of constants, then  $A$ is isomorphic to an abelian variety over $k$. (The rest follows from the definition of trace $0$ and simplicity of $A$.)  This is already contained in \cite{Hrushovski} but we recall the proof.  Applying Fact 4.4 and the dual isogeny we obtain a definable isogeny $f$ from $R({\cal C})$ to $A^{\sharp}$. By QE in $DCF_{0}$, $f$ is given by a rational function, and hence extends to a a surjective homomorphism  of algebraic groups from $R$ to $A$, which we again call $f$. Now $R$ is a connected commutative algebraic group over $\cal C$, so as $\cal C$ is algebraically closed, and $Ker(f)$ contains the unique maximal connected  linear algebraic subgroup of $R$, $R/Ker(f)$ is again defined over $\cal C$, and isomorphic as an algebraic group to $A$. 
\newline
(iv) This reduces  to the case where $A$ is simple.  Now $A^{\sharp}$ is $g$-minimal,
hence, by Zilber's indecomposability theorem, generated by some strongly
minimal subset, so $A^\sharp = S(A^\sharp)$ and is contained in $S(H)$. 

Suppose by way of contradiction
that $S(H)$ properly contains $A^{\sharp}$. Then by (*)
$S(H)/A^{\sharp}$ is  an infinite  finite-dimensional vector space over the
constants $\mathcal C$ of $\cal U$.\\ 
If $A^\sharp $ has $k$-trace $0$, then  by (iii) $ A^\sharp$ is orthogonal to $S(H)/A^{\sharp}$, and so, by \ref{socleproperty},   $S(H) = A^\sharp + V$ where $V$ is infinite, orthogonal to $A^\sharp$ and has  finite intersection with $A^\sharp$. As $A$ is simple, $V$ is Zariski dense in $A$, which contradicts  
Fact \ref{FactAsharpbasic} (i) which says that $A^\sharp$ is the smallest Zariski dense definable subgroup of $A$.\\ 
  If $A$ has non zero $k$-trace, as  $k$ is algebraically closed and we are in characteristic zero, $A$ is isomorphic to an abelian variety over $k$, so we can suppose that $A$ itself is defined over $k$. It follows that $A^\sharp = A(\CC)$. Now as both $A^\sharp$ and $S(H)/A^\sharp$ are non orthogonal to the strongly minimal  
field of constants,  $S(H)$ is almost strongly minimal, $S(H) \subset acl (F \cup \CC)$.  It follows by \ref{nonorthogonal} and  arguments which are now standard  that  $S(H)$ is definably isomorphic to $R(\CC)$ for $R$ a connected algebraic group over $\CC$ (see for example  in \cite{Hrushovski}  or   \cite{bouscaren} the proof of Proposition 2.1). Now let $N$ be the biggest connected linear subgroup of $R$, $N$ is defined over $\CC$ also, and we know that $R/N$ is an abelian variety. Pulling back the situation by the definable isomorphism  it follows that there is some infinite  definable group $V$ such that $V\subset S(H)$ and  $V \cap A^\sharp$ is finite. As $A$ is simple, $V$ must be Zariski dense in $A$, but this contradicts Fact \ref{FactAsharpbasic} (i) again.  
\end{proof}

\noindent {\em Remark about (ii)}: In \cite{Hrushovski} it is shown that for any $H$,
definable connected subgroup of $A(\cal U) $ of finite Morley rank, $S(H)$ is
rigid. We do know that $A^\sharp = S(A^\sharp )$  because 
$A^\sharp = \Sigma_{0\leq i\leq n} {A_i}^\sharp$, where each  $
{A_i}^\sharp$ are $g$-minimal, hence generated by a strongly minimal
set. But we do not want to quote this result from \cite{Hrushovski}
because its proof uses the full dichotomy theorem for strongly minimal
sets in $DCF_0$. In fact, the easy proof we use here is really the same as the one given
in \cite{Hrushovski}  to show that any abelian variety is rigid. 

\vspace{2mm}
\noindent
{\em Remark on stabilizers.} In the proof below, 
when we speak of stabilizers of (differential) algebraic subvarieties of a (differential) algebraic group we mean set-theoretic stabilizers. But in the contexts we consider this coincides with the model-theoretic stabilizer defined above. Likewise, if $X$ is an irreducible differential algebraic subvariety of a finite Morley rank differential algebraic group $G$, then the conclusion of 4.3 says that $X$ is, up to translation, {\em contained in}  $S(G)$.

\vspace{2mm}
\noindent
{\bf Proving Mordell-Lang} 
We now suppose that $A$ has $k$-trace $0$. 
The first steps are exactly the same as the ones in
\cite{Hrushovski}, which were inspired by Buium in
\cite{Buium-Annals}. First, quotienting by the connected component of 
$Stab_{A}(X)$ we
obtain another abelian variety over $K$ with $k$-trace $0$. So, we will  assume that $Stab_{A}(X)$ is finite (note that in the end when we get to the conclusion that $X$ is the translate of a subabelian variety of $A$, the fact that the stabilizer is finite means that $X$ is just one point).  
 
 By (*) in the proof of Lemma 2.2,  $A/A^{\sharp}$ definably embeds via some $\mu$ in a vector group. So
 $\mu(\Gamma)$ is contained in a finite-dimensional vector space over $\cal C$, the preimage of which we call $H$: a connected 
definable finite Morley rank subgroup of $A$ containing both $A^{\sharp}$ and $\Gamma$, and defined over $K$. As $X\cap \Gamma$ is Zariski dense in $X$, so is  $ X\cap H$.  One reduces to the case when $X\cap H$ is irreducible as a differential algebraic variety. It follows that $Stab_{H}(X\cap H)$  is finite. So, by  Lemma \ref{Asharpchar0} (ii) and (iv), 
we can apply  the weak  socle theorem, Proposition \ref{socletheorem}, and,  after replacing $X\cap H$  (and
so $X$)  by a suitable translate (which will be defined over $K^{\rm{diff}}$), $X\cap H$ is contained in  $A^{\sharp}= S(H)$. 

From then on, we diverge from the proof in \cite{Hrushovski} which at this point appealed to the dichotomy. 

We now consider $\mathcal A $, the $K$-definable group
$A^{\sharp}(\cal U)$ with all its induced structure over $K$, namely equipped with predicates for $K$-definable subsets of Cartesian powers. As $A^\sharp$ is definable,  we automatically have quantifier elimination for  $\mathcal A$ and of course $\mathcal A (K^{\rm{diff}})$ is an elementary substructure of $\mathcal A$. Furthermore, as $A^\sharp$ is a definable group of finite Morley Rank, 
so is $\mathcal A $.   Now consider the subset $\mathcal A (K)$ of
$\mathcal A $. One can check easily that it must be algebraically closed
in $\mathcal A$. Now $A^\sharp $ is a finite sum of $K$-definable $g$-minimal subgroups $A_{i}^\sharp$. And we likewise have the ${\mathcal A}_i$, the $A_{i}^{\sharp}$ with the induced structure.  
Each $A_{i}^{\sharp}(K)$ is clearly  infinite as it contains the torsion of $A_i$, so by  Corollary \ref {Corwagner},
$\mathcal  A (K) $ is an elementary substructure of $\mathcal A$.  
 \newline
The following was proved in \cite{marker-pillay},  
with the  extra assumption that $A^\sharp $ was strongly minimal: 

\noindent {\em Claim.}  $A^\sharp  (K)  = A ^\sharp (K^{\rm{diff}})$. 
\newline
{\em Proof of claim.}  If $b\in A^{\sharp}(K^{\rm{diff}})$ then $tp(b/K)$ is isolated in $DCF_{0}$. 
But then $tp(b/ \mathcal A (K))$ is isolated in the structure $\mathcal A$. As $\mathcal A (K)$ is an elementary substructure of $\mathcal A $, $b\in A^{\sharp}(K)$. 

\vspace{2mm}
\noindent

Note that $X$ is now defined over $K^{\rm{diff}}$, and $X\cap A^{\sharp}(K^{\rm{diff}})$ is Zariski-dense in $X$.  By  the Claim,
 $A^{\sharp}(K^{\rm{diff}}) = A^{\sharp}( K)$ (so in fact $X$ is defined over $K$), so,  by  the Theorem of the Kernel, $A^{\sharp}(K^{\rm{diff}})  = A_{torsion}$, hence by Manin-Mumford, $X$ is a translate of an abelian subvariety of $A$.

\subsection{The characteristic $p$ case}\label{Char.p}

 Once we have quantifier elimination by Theorem \ref{QEthm}, the proof in
 the characteristic $p$ case is  substantially simpler than the
characteristic $0$ case, avoiding in particular  any recourse to the
socle theorem. We work with the notation introduced in Section
\ref{preliminaries}: $k = \FP$,  $K =
k(t)^{\rm{sep}}$, $A, X$ are over $K$, $\Gamma$ is contained in the prime-to-$p$ division points
of a finitely generated subgroup of $A(K)$ (so $\Gamma < A(K)$ too),
$X\cap \Gamma$ is Zariski-dense in $X$, and we take $\cal U$ to be a
saturated elementary extension of $K$. As before, $A^{\sharp}$ denotes
$A^{\sharp}(\cal U)$ and   $A$ is a sum of simple abelian subvarieties $A_{1},..,A_{n}$, all
defined over $K$. We know that each $A_{i}^\sharp$ is a $g$-minimal
subgroup of $A(K)$ (Fact \ref{FactAsharpbasic}(iii)). Moreover $A$ is assumed to have $k$-trace $0$.

\vspace{2mm}
\noindent
We now consider $\mathcal A := A^{\sharp} $ (respectively ${\mathcal A}_i
:= {A}_{i}^{\sharp}$) with their induced structure over $K$. 

\begin{Lemma} \label{Asharpinduced} The groups  $\mathcal A$ and
 ${\mathcal A}_i$ are connected groups of finite Morley rank, the
${\mathcal A}_i$ are $g$-minimal and
${\mathcal A}(K)$ is an elementary substructure of $\mathcal A$.
\end{Lemma}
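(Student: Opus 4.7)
The plan is to combine Theorem \ref{QEthm} (the QE just proved) with the structural Facts \ref{FactAsharpbasic}--\ref{basicAsharpcharp} about $A^\sharp$ and the model-theoretic Corollary \ref{Corwagner}. First, by Lemma \ref{inducedQEfacts}(iii), QE makes $\mathcal A$ a saturated first-order structure whose definable sets are precisely the relatively $K$-definable subsets of $A^\sharp$. Fact \ref{basicAsharpcharp} then immediately gives that $\mathcal A$ and each $\mathcal A_i$ are commutative (since $A$ is abelian), connected, and of finite Morley rank, using also that each $A_i^\sharp$ is relatively definable in $A^\sharp$. For $g$-minimality of $\mathcal A_i$, I would observe that any definable subgroup in the structure $\mathcal A_i$ is (by QE) a relatively definable, hence type-definable in $\cal U$, subgroup of $A_i^\sharp$; by Fact \ref{FactAsharpbasic}(iii), simplicity of $A_i$ forces such a proper subgroup to be finite, so $\mathcal A_i$ has no proper infinite definable subgroup.

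For the elementary substructure statement I would apply Corollary \ref{Corwagner} to $G = \mathcal A$ and its decomposition into the $K$-definable $g$-minimal subgroups $\mathcal A_i$. The group-theoretic hypotheses and the saturation of $\mathcal A$ are already in hand from the previous paragraph, so it remains to verify: (i) that $\mathcal A(K) \cap \mathcal A_i = A_i^\sharp(K)$ is infinite for each $i$, and (ii) that $A^\sharp(K)$ is algebraically closed in $\mathcal A$. Point (i) is straightforward: $A_i^\sharp(K) = \bigcap_n p^n A_i(K)$ contains the prime-to-$p$ torsion of $A_i$, which is separably algebraic over $K$ and so lies in the separably closed field $K = k(t)^{\rm sep}$, and which is obviously infinite.

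The main subtlety is (ii), where the plan is to pass through automorphisms of $\cal U$. Suppose $b \in A^\sharp$ is algebraic over $A^\sharp(K)$ in the structure $\mathcal A$; then by QE, with parameters in $A^\sharp(K) \subseteq K$, a witnessing finite set has the form $A^\sharp \cap S$ for some $K$-definable $S \subseteq {\cal U}^n$. Any $\sigma \in \text{Aut}({\cal U}/K)$ preserves $S$ and also preserves $A^\sharp$ (which is $K$-type-definable), and hence permutes this finite set, giving $b$ a finite $\text{Aut}({\cal U}/K)$-orbit. Thus $b \in acl^{\cal U}(K)$ in the sense of $SCF_{p,1}$; but $K \prec \cal U$, so $acl^{\cal U}(K) = K$, and therefore $b \in A^\sharp \cap K = A^\sharp(K)$. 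Once (i) and (ii) are in place, Corollary \ref{Corwagner} concludes. This automorphism reduction is the step I would expect to be the most delicate, since it is where QE is doing the real work; the rest is bookkeeping.
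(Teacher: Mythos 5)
Your proof is correct and follows essentially the same route as the paper: Facts \ref{FactAsharpbasic}--\ref{basicAsharpcharp} plus Theorem \ref{QEthm} turn $\mathcal A$ into a saturated finite Morley rank structure with the $\mathcal A_i$ as $g$-minimal pieces, and Corollary \ref{Corwagner} then applies once one checks that $\mathcal A(K)$ is algebraically closed in $\mathcal A$ and each $\mathcal A_i(K)$ is infinite. Your automorphism argument for the algebraic-closedness of $\mathcal A(K)$ makes explicit what the paper dispatches with ``one can check easily\dots as $K$ is an elementary substructure of $\mathcal U$, quantifier elimination implies\dots'', but it is the same underlying computation.
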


\begin{proof}  We know by Fact \ref{basicAsharpcharp} that $A^\sharp$
  and the  ${A}_{i}^{\sharp}$ are connected groups, with relative Morley
  rank  and that the 
  ${A}_{i}^{\sharp}$ are $g$-minimal subgroups of $A^\sharp$.   By
  quantifier elimination of the induced structures (Theorem \ref{QEthm}),
it follows that $\mathcal A$ and the ${\mathcal A}_i$ are connected
groups of finite Morley rank, that each of the ${\mathcal A}_i$ 
remains a $g$-minimal subgroup of $\mathcal A$, and that $\mathcal A$ is
saturated. Consider the subset ${\mathcal A}(K)$ of $\mathcal A$. One
can check easily that as $K$ is an elementary substructure of $\cal
U$, quantifier elimination implies that ${\mathcal A}(K)$ is an algebraically closed subset
of $\mathcal A$. Furthermore, each ${\mathcal A}_i(K)$ is  infinite (it contains all the prime-to-$p$
torsion of $A_i$) and we can apply Corollary \ref{Corwagner} to conclude.
\end{proof}
 
\noindent We now complete the proof of Mordell-Lang:
            
 First here we again follow Hrushovski's proof to see that $\Gamma \cap
 X$ can be ``replaced'' by $E \cap X$ , for some $E$,   translate of
 $A^\sharp$, without losing the  Zariski denseness in $X$. 
Note that $p^{n+1}\Gamma$ has finite index in $p^{n}\Gamma$ for all
$n$. As $\Gamma$ meets $X$ in a Zariski-dense set, we find cosets
$D_{i}$ of $p^{i}\Gamma$ in $\Gamma$ such that $i<j$ implies
$D_{j}\subseteq D_{i}$, and each $D_{i}$ meets $X$ in a Zariski-dense
set. Hence we obtain a descending chain of cosets $E_{i}$ of
$p^{i}A(K)$ in $A(K)$, each meeting $X$ in a
Zariski-dense set. Passing to the saturated model $\cal U$, let $E$ be
the intersection of the $E_{i}$, $E$ is  a translate of $A^\sharp$
and by compactness $X\cap E$ is
Zariski-dense in $X$. 

At this point, Hrushovski appeals to the dichotomy theorem for thin
minimal types in separably closed fields. We replace this by
Manin-Mumford and the Theorem of the Kernel, but in order to do this,
we need to go down to the smaller field $K$ and we would need to know
that $E(K) \cap \Gamma$ is dense in $X$. But $K$ is no longer saturated,
and in fact, although $E$ is type definable over $K$,  $E(K)$ might be empty.
So we need to work with a slightly bigger translate than $E (K)$. 

We consider the two-sorted structure $M = ({\mathcal A}, {\mathcal E}) $, that is  $(A^\sharp ({\cal U}), E({\cal U}))$ with all the 
$K$-induced structure. It easily follows from Lemma \ref{Asharpinduced}  that 
$Th(M)$ has finite Morley rank and moreover that the sort $\mathcal A$
with induced structure has ${\mathcal A}(K)$ as an elementary
substructure. Let $M_{0} \prec M$ be prime (so atomic) over
${\mathcal A}(K)$. It follows that $M_{0}$ is of the form $({\mathcal A}(K),
{\mathcal E}_{0})$ for some elementary substructure ${\mathcal E}_{0}$ of
$\mathcal E$. Note that
${\mathcal E}_{0}$ is definably (without parameters) a principal homogeneous
space  for ${\mathcal A}(K)$. 
\newline
{\em Claim.}  $X\cap {\mathcal E}_{0}$ is Zariski-dense in $X$.
\newline
{\em Proof of claim.}  Suppose not. Then there is a proper subvariety
$Z$ of $X$ (defined over some field) such that $X\cap {\mathcal E}_{0} \subseteq
Z$. We may replace $Z$ by the Zariski closure of $X\cap {\mathcal
  E}_{0}$, and so we may assume that $Z$ is defined over ${\mathcal E}_{0}$. We now have that $X\cap
{\mathcal E}_{0} = Z\cap {\mathcal E}_{0}$.  Now $Z\cap \mathcal E $ viewed as a set definable in the
structure $\mathcal E$ (or $M$) is defined over ${\mathcal E}_{0}$, so as
${\mathcal E}_{0} \prec \mathcal E$ we
easily conclude that $X\cap \mathcal E  = Z\cap \mathcal E$, contradicting
Zariski-denseness of $X\cap \mathcal E$ in $X$. This completes the proof of the
claim.\qed 

\vspace{2mm}
\noindent
 Let $a\in X\cap {\mathcal E}_{0}$. Let $X_{1} = X-a$. Then $X_{1}\cap A^{\sharp}(K)$ is Zariski-dense in $X_{1}$. In particular $X_{1}$ is defined over $K$. Moreover using the Theorem of the Kernel, $X_{1}\cap A_{torsion}$ is Zariski-dense in $X_{1}$,  so by $MM$, $X_{1}$ is a translate of an abelian subvariety of $A$. This completes the proof.

\vspace{1cm}

\noindent Franck  Benoist\\
Laboratoire de Math\'ematiques d'Orsay\\
Univ. Paris-Sud, CNRS  \\
Universit\'e Paris-Saclay\\
91405 Orsay, France.\\
franck.benoist@math.u-psud.fr\\
\medskip 

\noindent Elisabeth Bouscaren\\
Laboratoire de Math\'ematiques d'Orsay\\
Univ. Paris-Sud, CNRS \\
Universit\'e Paris-Saclay\\
91405 Orsay, France.\\
elisabeth.bouscaren@math.u-psud.fr\\

\medskip 

\noindent Anand  Pillay\\
Department of Mathematics\\
University of Notre Dame\\
281 Hurley Hall\\
Notre Dame, IN 46556, USA.\\
 apillay@nd.edu\\

\end{document}